\documentclass[11pt]{amsart}
\usepackage{amssymb}
\usepackage{times}
\usepackage{url}
\usepackage{hyperref}
\usepackage{xcolor}

\usepackage{rotating}
\usepackage{pdflscape}

\newtheorem{theorem}{Theorem}[section]
\newtheorem{lemma}[theorem]{Lemma}

\newtheorem{corollary}[theorem]{Corollary}
\theoremstyle{remark}
\newtheorem{remark}[theorem]{Remark}

\theoremstyle{example}

\newcommand{\F}{\mathbb{F}}
\newcommand{\Z}{\mathbb{Z}}
\newcommand{\Q}{\mathbb{Q}}

\newcommand{\G}{{\Gamma_n}}

\newcommand{\GL}{\mathrm{GL}}

\newcommand{\SL}{\mathrm{SL}}
\newcommand{\PSL}{\mathrm{PSL}}
\newcommand{\Sp}{\mathrm{Sp}}

\addtolength{\oddsidemargin}{-.25in}
	\addtolength{\evensidemargin}{-.25in}
	\addtolength{\textwidth}{.5in}
\definecolor{grey}{rgb}{0.6,0.6,0.6}
\def\ppqf#1#2{{\color{grey}\mathop{\underline{{\color{black}#1}}%
}\limits_{{\scriptscriptstyle #2}}}}


\begin{document}

\title[Algorithms for experimenting with Zariski dense 
subgroups]{Algorithms for experimenting with Zariski dense subgroups}

\author{A.~S.~Detinko}
\address{School of Computer Science\\
University of St~Andrews\\
North Haugh\\
St~Andrews KY16 9SX\\
UK}
\email{ad271@st-andrews.ac.uk}

\author{D.~L.~Flannery}
\address{
School of Mathematics, Statistics and Applied Mathematics\\
National University of Ireland Galway\\
University Road\\
Galway H91TK33\\
Ireland}
\email{dane.flannery@nuigalway.ie}

\author{A.~Hulpke}
\address{Department of Mathematics\\
Colorado State University\\
Fort Collins\\
CO 80523-1874\\
USA}
\email{Alexander.Hulpke@colostate.edu}

\footnotetext{{\sl 2010 Mathematics Subject Classification}: 
20-04, 20G15, 20H25, 68W30.}
\footnotetext{To appear in Experimental Mathematics.}

\begin{abstract}
We give a method to describe all congruence images 
of a finitely generated Zariski dense group $H\leq \SL(n, \Z)$. 
The method is applied to obtain efficient algorithms for solving this
problem in odd prime degree $n$; if $n=2$ then we compute 
all congruence images only modulo primes. 
We propose a separate method that works for all $n$ 
if $H$ contains a known transvection.
The algorithms have been implemented in {\sf GAP}, enabling
computer experiments with important classes of linear groups 
that have recently emerged.
\end{abstract}

\maketitle

\section{Introduction}

This paper further develops methods and algorithms for computing with 
linear groups over infinite domains.
It is a sequel to \cite{Density}.

Let $H$ be a finitely generated subgroup of $\SL(n, \Z)$, $n\geq 2$, 
that is Zariski dense in $\SL(n, \mathbb{R})$. By the strong 
approximation theorem, $H$ is congruent to $\SL(n, p)$ modulo $p$ for all 
but a finite number of primes $p$~\cite[p.~391]{LubotzkySegal}. 
If $n > 2$ and $H$ is arithmetic, i.e., $H$ has finite index in $\SL(n, \Z)$, 
then the congruence subgroup property guarantees that $H$ contains a 
\emph{principal congruence subgroup of level $m$} for some $m$, i.e., 
the kernel $\G_{, m}$ of the reduction modulo $m$ homomorphism 
$\varphi_m: \SL(n, \Z)\rightarrow\SL(n, \Z_m)$. In that event $H$
contains a unique maximal principal congruence subgroup $\G_{,M}$, and
we call $M = M(H)$ the \emph{level of $H$}. 
The dense group $H$ is contained in a uniquely defined 
minimal arithmetic overgroup $\mathrm{cl}(H)$, namely the intersection 
of all arithmetic subgroups of $\SL(n, \Z)$ containing $H$ (its 
`arithmetic closure')~\cite[Section 3.3]{Density}. 
The level of $H$ is defined to be the level of 
$\mathrm{cl}(H)$, and is again denoted $M(H)$. 
Sarnak~\cite{Sarnak} calls dense non-arithmetic
 $H\leq \SL(n, \Z)$ a \emph{thin} matrix group. 

In \cite{Density} we developed practical algorithms to compute the level 
$M$ of a dense group $H \leq \SL(n, \Z)$ for $n > 2$. This was 
motivated by the fact that $M$ is the key component of our algorithms 
to compute with arithmetic subgroups of $\SL(n, \Z)$ \cite{Arithm}. 
Once we have $M$, we can find $\mathrm{cl}(H)$ and obtain 
further information about dense $H$ via computation with $\mathrm{cl}(H)$.

Algorithms to compute $M$ were implemented
and used to carry out extensive computer experiments, as detailed in 
\cite[Section 4]{Density}. Our method requires the set 
$\Pi = \Pi(H)$ of primes $p$ such that 
$\varphi_p(H) \neq \SL(n, p)$: essentially, a computational realization of
the strong approximation theorem. 

The aim of the present work is twofold. First, in
Section~\ref{SAandRecognitionOfImages} we establish a general 
method to compute $\Pi(H)$ for dense $H \leq \SL(n, \Z)$, 
drawing on the classification of maximal subgroups 
of $\SL(n, p)$ as in \cite{Aschbacher84}
(see also \cite[p.~397]{LubotzkySegal}). 
This is then applied in Section~\ref{AlgorithmsSA} to obtain efficient 
algorithms to compute $\Pi(H)$ for prime degree $n$ (in which case 
the types of maximal subgroups of $\SL(n, p)$ are quite restricted). 
Moreover, for odd prime $n$, we build on this knowledge to
describe the congruence images of $H$ modulo all positive integers.  
Arbitrary degrees $n$ are treated in \cite{SAT_General} 
(albeit with algorithms that are less efficient for prime $n$ than those herein).

We also give an algorithm to compute $\Pi(H)$ for subgroups $H$ 
of $\SL(2n, \Z)$ that contain a known 
transvection (a unipotent element $t$ such that $t - 1_n$ 
has matrix rank $1$). 
This completes the task begun in \cite[Section 3.2]{Density}.

Another goal is to perform computer experiments successfully 
with low-dimensional dense representations 
of finitely presented groups that have recently been the focus of 
much attention. We compute $\Pi$ and $M$ for each group, thus 
enabling us to describe
all of its congruence quotients. Experimental results are
presented in Section~\ref{Experimenting}.

We adhere to the following conventions and notation. 
Congruence images are sometimes indicated by overlining. 
Pre-images in $H =\langle g_1, \ldots , g_r\rangle\leq \SL(n,\Z)$ of 
elements of $\bar{H}$ written as words in the $\bar{g}_i$ are found 
by `lifting':
$\bar{g}_{k_1}^{m_1} \cdots \bar{g}_{k_s}^{m_s}$ has pre-image
$g_{k_1}^{m_1} \cdots g_{k_s}^{m_s}$.
The set of prime divisors of $a \in \mathbb{N}$ is denoted 
$\pi(a)$.  Throughout, $\F$ is a  field.

\section{Strong approximation and recognition of congruence images}
\label{SAandRecognitionOfImages}

The core idea of our approach to computing $\Pi(H)$ is to find 
all primes $p$ such that 
$\varphi_p(H)$ lies in a maximal subgroup of $\SL(n, p)$. 
Here we provide general methods for this purpose. 

\subsection{Large congruence images}

Let $H$ be infinite. Given a positive integer $k$, 
we find all primes $p$ such that $\varphi_p(H)$ has elements of order
greater than $k$
(cf.~\cite[Chapter 4]{Wehrfritz} and \cite[Section 3.5]{Recog}). 

Since a periodic linear group is locally finite, the finitely generated 
group $H$ has an element $h$ of infinite order. We can find $h$ quickly 
by random selection (see \cite[Section~4.2, p.~107]{Recog}, and 
the discussion in Subsection~\ref{randominfo} on
randomly selecting elements with specified properties).  
For $1\leq i\leq k$, let $m_i$ be the greatest common divisor of 
the non-zero entries of $h^i-1_n$, and let $l= \mathrm{lcm}(m_1,\ldots, m_k)$.
If $p \notin \pi(l)$ then $|\varphi_p(H)| > k$. 
For each $p \in \pi(l)$ we check whether $|\varphi_p(H)| < k$.
The preceding steps define a procedure ${\tt PrimesForOrder}$ that 
accepts $k$ and infinite $H \leq \SL(n, \Z)$,
and returns the (finite) set of primes $p$ such that $|\varphi_p(H)| < k$. 

We will also need the following.
\begin{lemma}\label{HInfinite}
Suppose that $\varphi_p(H) = \SL(n,p)$ for some prime $p$.
\begin{itemize}
\item[{\rm (i)}] If $n\geq 3$ then $H$ is infinite.
\item[{\rm (ii)}] If $n=2$ and $p\geq 3$ then $H$ is infinite.
\end{itemize} 
\end{lemma}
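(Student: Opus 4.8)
The plan is to prove the contrapositive: assuming $H$ is finite, I will contradict $\varphi_p(H)=\SL(n,p)$ by a crude order comparison. The key observation is that $\varphi_p$ restricts to a homomorphism $H\to\SL(n,p)$ with image all of $\SL(n,p)$, so $|\SL(n,p)|=|\varphi_p(H)|\leq|H|$. On the other hand, a finite subgroup of $\SL(n,\Z)$ is a finite subgroup of $\GL(n,\Z)$, whose order is bounded by a constant $B(n)$ depending only on $n$ (Minkowski's theorem on finite subgroups of $\GL(n,\Q)$; cf.~\cite[Chapter~4]{Wehrfritz}). Thus it suffices to show $|\SL(n,p)|>B(n)$ in each of the two cases, whence $H$ cannot be finite. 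Note that this route uses only a bound on $|H|$, not faithfulness of $\varphi_p$, so it applies uniformly and in particular covers $p=2$ in part (i).

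Recall $|\SL(n,p)|=p^{n(n-1)/2}\prod_{i=2}^{n}(p^i-1)$, which is strictly increasing in $p$ for fixed $n\geq2$. For part (ii), where $n=2$, I would use $B(2)=12$ (the largest finite subgroup of $\GL(2,\Z)$ is the order-$12$ symmetry group of the hexagonal lattice); since $|\SL(2,p)|=p(p^2-1)\geq 24>12$ for every prime $p\geq 3$, the claim follows. This is exactly where the hypothesis $p\geq 3$ is needed: $|\SL(2,2)|=6<12$, so the order bound alone cannot exclude a finite $H$ when $n=p=2$. For part (i), monotonicity in $p$ reduces the problem to the smallest prime $p=2$, so it is enough to verify $|\SL(n,2)|=|\GL(n,2)|=\prod_{i=0}^{n-1}(2^n-2^i)>B(n)$ for all $n\geq 3$.

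The main obstacle is precisely this last inequality. I would settle it using the classification of maximal finite subgroups of $\GL(n,\Z)$: apart from finitely many exceptional degrees (such as $n=2,4,6,7,8,9,10$), the largest has order $2^n\,n!$, the group of signed permutation matrices, and in every degree $B(n)=2^{O(n\log n)}$. Since $|\GL(n,2)|\geq 2^{n^2-n}$ grows like $2^{n^2}$, it dominates $B(n)$ for all large $n$, while the remaining small and exceptional degrees are disposed of by direct computation, e.g.\ $|\GL(3,2)|=168>48=B(3)$ and $|\GL(4,2)|=20160>1152=B(4)$. To avoid quoting the exact values $B(n)$, I could instead invoke Minkowski's explicit divisibility bound $M(n)$ (every finite subgroup order divides $M(n)$) and compare $|\GL(n,2)|$ with $M(n)$; the asymptotics are identical and the finitely many small cases again reduce to a routine check. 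As a cross-check in degree $2$, one can bypass the order estimate entirely by the structural fact that $\PSL(2,\Z)\cong C_2\ast C_3$ has only cyclic finite subgroups, so a finite $H\leq\SL(2,\Z)$ is cyclic and cannot surject onto the nonabelian group $\SL(2,p)$ for any prime $p$, which recovers the conclusion.
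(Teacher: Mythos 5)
Your proposal is correct and takes essentially the same route as the paper: the paper's proof also assumes $H$ finite and compares the largest possible order of a finite subgroup of $\GL(n,\Z)$ (citing Feit's Theorem~A, the same classification you invoke via Minkowski) against $|\SL(n,p)|$, concluding that the latter is too large in both cases (i) and (ii). You merely spell out the numerical estimates and the reduction to $p=2$ in more detail, and your added $C_2 \ast C_3$ structural argument in degree $2$ is a bonus the paper does not use (it even covers the excluded case $n=p=2$).
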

\begin{proof}
Theorem~A of \cite{Feitpreprint} states the largest order of a 
finite subgroup of $\GL(n,\Z)$.
In both cases (i) and (ii), this maximal order is less than 
$|\SL(n,p)|$. 
\end{proof}

\subsection{Irreducibility}
\label{irredsec}

This subsection recaps an argument from \cite[Section~3.2]{Density}.

We test whether $H\leq \SL(n, \Z)$ is absolutely irreducible
by computing a $\Q$-basis
$\mathcal{A} = \{A_1, \ldots , A_{m} \}$ of the enveloping 
algebra $\langle H \rangle_{\Q}$, where the $A_i$ are words over 
a generating set of $H$.  
If $m=n^2$ then $H$ is absolutely irreducible, and
$\varphi_p(H)$ is absolutely irreducible for any prime $p$ not 
dividing $\Delta := \mathrm{det}[\mathrm{tr}(A_iA_j)]$; 
here $\mathrm{tr}(x)$ is the trace of a matrix $x$. 
Hence we have the following. 
\begin{lemma}\label{LemmaAbsIrred}
If $H$ is absolutely irreducible then $\varphi_p(H)$ 
is absolutely irreducible for almost all primes $p$. 
\end{lemma}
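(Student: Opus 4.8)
The plan is to unwind the preceding paragraph into a self-contained argument. The hypothesis that $H$ is absolutely irreducible means exactly that the enveloping algebra $\langle H\rangle_{\Q}$ is all of the $n^2$-dimensional matrix algebra $M_n(\Q)$, so the computed $\Q$-basis $\mathcal{A}=\{A_1,\ldots,A_m\}$ has $m=n^2$. Each $A_i$ is a word over a fixed generating set of $H$, hence lies in $M_n(\Z)$ after clearing denominators; I may assume the $A_i$ are integral, so that the Gram-type matrix $[\mathrm{tr}(A_iA_j)]$ is an integer matrix and $\Delta=\det[\mathrm{tr}(A_iA_j)]$ is a fixed nonzero integer. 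The nonvanishing of $\Delta$ over $\Q$ is precisely the nondegeneracy of the trace form on the simple algebra $M_n(\Q)$, which holds because this algebra is separable; this is the key structural input, and I would invoke it (or the fact that $\{A_i\}$ being a $\Q$-basis forces $\det[\mathrm{tr}(A_iA_j)]\neq 0$) to guarantee $\Delta\neq 0$.

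First I would fix a prime $p$ not dividing $\Delta$ and show $\varphi_p(H)$ is absolutely irreducible. The images $\varphi_p(A_1),\ldots,\varphi_p(A_{n^2})$ are words over the generators of $\varphi_p(H)$, so they lie in the enveloping algebra $\langle\varphi_p(H)\rangle_{\F_p}\subseteq M_n(\F_p)$. It therefore suffices to prove that these $n^2$ images are linearly independent over $\F_p$, for then they span the full matrix algebra $M_n(\F_p)$, which is equivalent to absolute irreducibility of $\varphi_p(H)$. The Gram matrix of the reduced elements under the trace form is the reduction mod $p$ of $[\mathrm{tr}(A_iA_j)]$, since trace and reduction commute; its determinant is $\Delta\bmod p$. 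Because $p\nmid\Delta$, this determinant is a unit in $\F_p$, so the Gram matrix is invertible and the $\varphi_p(A_i)$ are linearly independent. Hence $m=n^2$ holds mod $p$ as well, and $\varphi_p(H)$ is absolutely irreducible.

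Finally, I would conclude the ``almost all'' statement: the exceptional primes are confined to $\pi(\Delta)$, the set of prime divisors of the single nonzero integer $\Delta$, which is finite. Thus $\varphi_p(H)$ is absolutely irreducible for every prime $p\notin\pi(\Delta)$, giving the claim.

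I expect the main obstacle to be purely bookkeeping rather than conceptual: ensuring the $A_i$ can be taken integral with $\Delta\in\Z\setminus\{0\}$, and justifying cleanly that the reduction of the trace-form Gram matrix is exactly the trace-form Gram matrix of the reductions. Both follow from the compatibility of $\mathrm{tr}$ and $\varphi_p$ with the ring structure; the only genuine content is the nondegeneracy of the trace form forcing $\Delta\neq 0$, and that is already packaged into the hypothesis that $\mathcal{A}$ is a basis.
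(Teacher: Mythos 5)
Your proposal is correct and follows essentially the same route as the paper, which derives the lemma directly from the preceding paragraph: the integrality of the words $A_i$ (automatic since $H\leq \SL(n,\Z)$ has determinant-one generators, so no denominators ever arise), the nonvanishing of $\Delta=\det[\mathrm{tr}(A_iA_j)]$ via nondegeneracy of the trace form, and the observation that $p\nmid\Delta$ forces the reduced basis to remain linearly independent, hence spanning, over $\F_p$. You have merely made explicit the bookkeeping the paper leaves implicit.
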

If $p \, | \, \Delta$ then $\varphi_p(H)$ 
might be absolutely irreducible. Testing for this is 
the last step in ${\tt PrimesForAbsIrreducible}(H)$, 
which returns the set of all primes $p$ such that $\varphi_p(H)$ is 
not absolutely irreducible. 
Note that if $\bar{H}$ is absolutely irreducible
(e.g., $\bar{H} = \SL(n, p)$), and $\bar{\mathcal{A}}$
is a basis of $\langle \bar{H} \rangle_{\Z_p}$, then  
$\mathcal{A}$ is a basis of $\langle H \rangle_{\Q}$.

\subsection{Primitivity}

Next, we give conditions for the congruence image of an (irreducible) 
primitive subgroup of $\SL(n,\Z)$ to be imprimitive.
The main concern is prime $n$; in such degrees 
an irreducible linear group is either primitive
or monomial. 
\begin{lemma}\label{Lemma21}
If $H\leq \SL(n,\Z)$ is not solvable-by-finite
then $\varphi_p(H)$
is not monomial for almost all primes $p$. 
\end{lemma}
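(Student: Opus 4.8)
The plan is to prove the contrapositive: if $\varphi_p(H)$ is monomial for infinitely many primes $p$, then $H$ is solvable-by-finite. The starting point is the structure of a monomial group. Any monomial subgroup $G$ of $\GL(n, \F)$ stabilizes a decomposition of the underlying space into $n$ lines, and the subgroup $D$ fixing each line setwise is a normal abelian (indeed diagonal) subgroup with $G/D$ isomorphic to the induced permutation group on the lines, hence embeddable in $S_n$. Thus every monomial group has a normal abelian subgroup of index dividing $n!$, a bound that is crucially independent of $p$.

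Next I would exploit that $H$, being finitely generated, has only finitely many subgroups of index at most $n!$. Let $H_1$ be their intersection: a characteristic subgroup of finite index, itself finitely generated. For each prime $p$ with $\varphi_p(H)$ monomial, let $D_p$ be the (abelian, normal) diagonal part of $\varphi_p(H)$; then $K_p := \varphi_p^{-1}(D_p)$ is normal in $H$ of index $[\varphi_p(H):D_p]$ dividing $n!$. By construction $H_1 \leq K_p$, so $\varphi_p(H_1) \leq D_p$ is abelian. The effect is to produce a single finite-index subgroup $H_1$ whose congruence image is abelian for each of the infinitely many $p$ in play.

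The argument then closes with an integrality step. Fix generators $a_1, \ldots, a_t$ of $H_1$. For every pair $(a_i, a_j)$, the entries of the integer matrix $[a_i, a_j] - 1_n$ are divisible by $p$ for all the infinitely many primes $p$ under consideration, since $\varphi_p([a_i,a_j]) = 1$. An integer divisible by infinitely many primes is zero, so $[a_i, a_j] = 1_n$; hence $H_1$ is abelian and $H$ is abelian-by-finite. I expect the main obstacle to be the bookkeeping that makes the index bound genuinely uniform in $p$: one must check that the diagonal part of a monomial group is normal with quotient in $S_n$ regardless of the conjugating matrix used to realize $\varphi_p(H)$ in monomial form, so that the finitely-many-subgroups argument yields one $H_1$ serving all the relevant primes at once. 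Once that uniformity is secured, the reduction to abelianness of $H_1$ and the integrality conclusion are routine.
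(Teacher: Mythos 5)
Your proof is correct, but it takes a genuinely different route from the paper's. The paper argues directly rather than by contraposition: since $H$ is not solvable-by-finite, the Tits alternative gives a free non-abelian subgroup, hence elements $g, h \in H$ with $c := [g^k, h^k] \neq 1_n$, where $k$ is the exponent of $\mathrm{Sym}(n)$; as only finitely many primes divide the nonzero entries of $c - 1_n$, we get $\bar{c} \neq 1_n$ for almost all $p$, whereas in a monomial image the $k$-th powers $\bar{g}^k$, $\bar{h}^k$ would lie in the abelian diagonal kernel and hence commute. Both arguments rest on the same structural fact about monomial groups (a normal abelian diagonal subgroup whose quotient embeds in $\mathrm{Sym}(n)$), but you exploit the bounded \emph{index} of that subgroup while the paper exploits the bounded \emph{exponent} of its quotient. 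Your version buys two things: it avoids the Tits alternative altogether, needing only the finiteness of the number of subgroups of index at most $n!$ in a finitely generated group together with an integrality argument; and it proves a formally stronger statement, since in contrapositive form infinitely many monomial images force $H$ to be abelian-by-finite, not merely solvable-by-finite. What the paper's version buys is effectivity, which is the point of the lemma in context: the single commutator $c$ yields an explicit integer $d$ (the gcd of the nonzero entries of $c - 1_n$) whose prime divisors contain every bad prime, and this is exactly what the procedure ${\tt PrimesForMonomial}$ computes. Your subgroup $H_1$, the intersection of all subgroups of index at most $n!$, is not practically computable for a matrix group given only by generators (no presentation of $H$ is available), so your argument, though valid, would not support the algorithm that the paper builds on this lemma.
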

\begin{proof}
Since $H$ has a free non-abelian subgroup by the Tits alternative, 
given $k\geq 1$ there 
exist $g$, $h\in H$ such that 
$c:=[g^k, h^k] \neq 1_n$. 
Then $[\bar{g}^k, \bar{h}^k] = \bar{c} \neq 1_n$ for almost all primes
$p$. 
The lemma follows by taking $k$ to be the exponent of $\mathrm{Sym}(n)$.
\end{proof}
\begin{lemma}\label{Lemma21n}
For prime $n$, an infinite solvable-by-finite 
primitive (irreducible)
subgroup $H$ of $\, \SL(n, \Z)$ is solvable.
\end{lemma}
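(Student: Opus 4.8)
The plan is to pass to the Zariski closure $G = \overline{H} \leq \SL(n,\C)$, viewed as an algebraic group defined over $\Q$, and to exploit that $G$ is solvable-by-finite, so that its identity component $G^{\circ}$ is connected and solvable. First I would show that $G^{\circ}$ is in fact a torus $T$. Its unipotent radical $U = R_u(G^{\circ})$ is normal in $G$ and defined over $\Q$, so by Kolchin's theorem the fixed space $V^{U}$ is a nonzero, $\Q$-rational, $H$-invariant subspace of $\C^{n}$; irreducibility of $H$ then forces $V^{U} = \C^{n}$, i.e.\ $U = 1$, whence $G^{\circ} = T$ is a torus. Since $T$ is abelian, $H \cap T$ is an abelian normal subgroup of finite index in $H$, so $H$ is already abelian-by-finite and the entire difficulty is to bound the finite quotient.

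Next I would decompose $\C^{n} = \bigoplus_{\chi} V_{\chi}$ into weight spaces for $T$. Both $H$ (through $G$, which normalizes $T$) and the Galois group $\mathrm{Gal}(\overline{\Q}/\Q)$ permute the set $\Omega$ of weight spaces, and both preserve dimensions; writing $P, \Gamma \leq \mathrm{Sym}(\Omega)$ for the respective images, irreducibility of $H$ translates into $\langle P, \Gamma\rangle$ acting transitively on $\Omega$ (every $\Q$-rational $H$-invariant subspace is $T$-invariant, hence a sum of weight spaces, hence corresponds to a $\langle P,\Gamma\rangle$-invariant subset of $\Omega$). Thus all $V_{\chi}$ share a common dimension $d$ with $d\cdot|\Omega| = n$. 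Because $n$ is prime and $H$ is infinite—a single weight space would make $T$ scalar, hence finite in $\SL(n,\C)$ and forcing $H$ finite—we must have $|\Omega| = n$ and $d = 1$. Consequently the subgroup $K \leq G$ fixing every $V_{\chi}$ is diagonal, hence abelian, and $H/(H\cap K) \cong P$.

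The key observation is that the actions of $H$ and of $\mathrm{Gal}(\overline{\Q}/\Q)$ on $\Omega$ commute, since each matrix in $H \leq \SL(n,\Z)$ is $\Q$-rational and so fixed by Galois; thus $P$ and $\Gamma$ centralize one another in $\mathrm{Sym}(\Omega)$. Here primitivity enters: if $\Gamma$ were not transitive, then (as $P$ permutes the $\Gamma$-orbits transitively) those orbits would assemble into $\geq 2$ mutually $\Q$-rational subspaces permuted by $H$, a block system contradicting primitivity. Hence $\Gamma$ is transitive. The centralizer of a transitive subgroup of $\mathrm{Sym}(\Omega)$ is semiregular, and $P$ lies in this centralizer, so $P$ is semiregular and $|P|$ divides $n$. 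Therefore $P$ is trivial or cyclic of prime order $n$, in either case solvable, and $H$ is (abelian)-by-(solvable), hence solvable.

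The step I expect to be the main obstacle is precisely this $\Q$-versus-$\overline{\Q}$ bookkeeping: primitivity is a condition over $\Q$, whereas the torus weight decomposition lives over $\overline{\Q}$, so the argument hinges on recognizing $H$ and $\mathrm{Gal}(\overline{\Q}/\Q)$ as \emph{commuting} permutation groups on the $n$ weight lines and on using primitivity to force the Galois image to be transitive. Everything else (the reduction to a torus, the prime-degree dimension count, and the semiregularity of a centralizer) is comparatively routine once this interplay is set up correctly.
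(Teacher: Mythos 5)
Your proof is correct, but it takes a genuinely different route from the paper's. The paper never passes to the Zariski closure: it takes a solvable normal subgroup $K\unlhd H$ of finite index, uses Clifford theory, primitivity of $H$, and integrality (eigenvalues of elements of $\SL(n,\Z)$ are algebraic integers, so a monomial or scalar subgroup would be finite) to show $K$ is irreducible and primitive, and then invokes the classical structure theory of solvable primitive linear groups in prime degree: a maximal abelian normal subgroup $A$ of $K$ is irreducible, so $\langle A\rangle_{\Q}$ is a cyclic field extension of $\Q$ of degree $n$. The punchline is to show $H$ normalizes this field---if $hah^{-1}=bg^{k}$ fell outside $\langle A\rangle_\Q$, then $(bg^k)^n$ would be scalar and $a$ of finite order, contradicting the existence of an infinite-order element in $A$---whence $H$ is abelian-by-cyclic. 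Your torus $T=G^{\circ}$ is essentially the algebraic-group avatar of the paper's field $\langle A\rangle_{\Q}$: the $n$ weight lines are its eigenlines, and transitivity of $\mathrm{Gal}(\overline{\Q}/\Q)$ on them reflects the fact that $\langle A\rangle_\Q$ has no nontrivial idempotents. Where the paper argues that $H$ must normalize the field, you instead observe that the images $P$ of $H$ and $\Gamma$ of the Galois group commute in $\mathrm{Sym}(\Omega)$, use primitivity to force $\Gamma$ transitive, and conclude $P$ is semiregular of order dividing $n$; this is a nice, self-contained permutation-group substitute for the paper's contradiction via infinite-order elements. What your route buys is a clean separation of the roles of the hypotheses (irreducibility kills $R_u(G^\circ)$ and gives joint transitivity, infinitude rules out the scalar torus, primitivity forces Galois-transitivity), at the cost of algebraic-group and Galois-descent machinery where the paper gets by with structure theory cited from Suprunenko and Wehrfritz; both arguments ultimately exhibit $H$ as abelian-by-(cyclic of order dividing $n$). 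One loose phrase in your write-up: a $T$-invariant subspace need not be a sum of full weight spaces, only of subspaces of them; but the implication you actually use---a proper nonempty $\langle P,\Gamma\rangle$-invariant subset of $\Omega$ yields a proper nonzero $\Q$-rational $H$-invariant subspace, contradicting irreducibility---is the correct one, so this is cosmetic.
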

\begin{proof}
Let $K\unlhd H$ be solvable of finite index. 
Since $n$ is prime,  $K$ is scalar or irreducible. 
If $K$ were scalar then $H$ would be finite. 
Thus $K$ is irreducible. If $K$ were monomial
over $\Q$ then it would be finite once more; so $K$ is primitive.

Let $A$ be a maximal abelian normal subgroup of $K$. 
Then $A$ is irreducible, and $K = \langle A, g \rangle$ for some
$g$ because the field $\langle A \rangle_{\Q}$ is a cyclic 
extension of $\Q 1_n$ of degree $n$.
If $H$ normalizes $\langle A\rangle_\Q$ then $H$ is solvable. 
Suppose on the contrary that $hah^{-1} = bg^k$ for some $h\in H$, 
$a, b \in A$, and $k$ coprime to $n$.
Conjugation by $bg^k$ induces a $\Q$-automorphism of 
$\langle A \rangle_{\Q}$ that fixes $(bg^k)^n$. 
Hence $(bg^k)^n$ is scalar, implying that $a$ has finite order. 
But there is an infinite order element in $A$.
We conclude that $H$ must normalize $\langle A \rangle_{\Q}$.
\end{proof}

\begin{corollary}\label{Corollary21}
Let $n$ be prime. If $H \leq \SL(n,\Z)$ is infinite, non-solvable, 
and primitive, then $\varphi_p(H)$ is primitive for almost all 
primes $p$.
\end{corollary}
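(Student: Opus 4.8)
The plan is to combine the three preceding results. The goal is to show that for prime $n$, if $H$ is infinite, non-solvable, and primitive, then $\varphi_p(H)$ is primitive for almost all $p$. Since $n$ is prime, an irreducible linear group in degree $n$ is either primitive or monomial (as noted before Lemma~\ref{Lemma21}), so to prove $\varphi_p(H)$ primitive it suffices to show that $\varphi_p(H)$ is irreducible and not monomial for almost all $p$.

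Let me think about what tools I have. Lemma~\ref{LemmaAbsIrred} handles irreducibility: if $H$ is absolutely irreducible then $\varphi_p(H)$ is absolutely irreducible — hence irreducible — for almost all $p$. So first I would argue that the hypotheses force $H$ to be absolutely irreducible. Indeed $H$ is primitive and non-solvable, hence in particular irreducible; and primitivity together with the prime degree should upgrade this to absolute irreducibility. (If $H$ were irreducible but not absolutely irreducible, its enveloping algebra would be a proper field extension of $\Q 1_n$, forcing $H$ to normalize that field and behave like a group of smaller degree over the extension — incompatible with primitivity and non-solvability when $n$ is prime.) With absolute irreducibility in hand, Lemma~\ref{LemmaAbsIrred} gives irreducibility of $\varphi_p(H)$ for almost all $p$.

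For the monomial part I would invoke Lemma~\ref{Lemma21}. That lemma requires $H$ to be \emph{not} solvable-by-finite, so the key step is to verify this hypothesis. Here is where Lemma~\ref{Lemma21n} does its work, via its contrapositive: Lemma~\ref{Lemma21n} states that an infinite solvable-by-finite primitive irreducible subgroup of $\SL(n,\Z)$ (for prime $n$) is actually solvable. Since our $H$ is infinite, primitive (hence irreducible), and non-solvable, it cannot be solvable-by-finite — otherwise Lemma~\ref{Lemma21n} would force it to be solvable, contradicting the hypothesis. Thus $H$ is not solvable-by-finite, and Lemma~\ref{Lemma21} applies to give that $\varphi_p(H)$ is not monomial for almost all $p$.

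Finally I would assemble the pieces: for all but finitely many $p$, the image $\varphi_p(H)$ is simultaneously irreducible and not monomial, and since the degree $n$ is prime this means $\varphi_p(H)$ is primitive, as required. The main obstacle I anticipate is the first step — cleanly justifying that the stated hypotheses imply $H$ is absolutely irreducible (so that Lemma~\ref{LemmaAbsIrred} is applicable), rather than merely irreducible. The combinatorial heart of the argument (non-solvable $+$ primitive $+$ infinite $\Rightarrow$ not solvable-by-finite) is essentially immediate from Lemma~\ref{Lemma21n}, so the real care goes into the reduction-theoretic bookkeeping about irreducibility versus absolute irreducibility and making sure no primes are lost in passing between them.
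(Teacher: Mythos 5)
Your proof is correct and follows exactly the route the paper intends: the corollary is stated without proof precisely because it is the combination of Lemma~\ref{Lemma21n} (in contrapositive form, to rule out solvable-by-finite) with Lemma~\ref{Lemma21}, which is what you do. Your additional care about absolute irreducibility is sound and fills a detail the paper leaves implicit --- in prime degree an irreducible group that is not absolutely irreducible has enveloping algebra a degree-$n$ field, hence is abelian, which your non-solvability hypothesis excludes, so Lemma~\ref{LemmaAbsIrred} indeed supplies irreducibility of $\varphi_p(H)$ for almost all $p$.
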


Given an input group $H$ that is 
not solvable-by-finite, ${\tt PrimesForMonomial}$ returns 
the set of primes $p$ such that $\varphi_p(H)$ is monomial. 
The proof of Lemma~\ref{Lemma21} furnishes a method to 
compute this finite set.
First we find $g$, $h\in H$ such that $[g^k, h^k] \neq 1_n$, 
where $k$ is the exponent of $\mathrm{Sym}(n)$.
(In our experiments $g$, $h$ are found by random selection; 
cf.~\cite{Aoun}, and see Subsection~\ref{randominfo}.) 
Let $d$ be the gcd of the non-zero entries of
$[g^k,h^k]-1_n$.  Then $\varphi_p(H)$ is non-monomial if $p\not \in \pi(d)$. 
Finally, we test whether $\varphi_p(H)$ is monomial for each 
$p\in \pi(d)$, using, e.g., \cite{gaprecog}. 

Although we can detect whether $H$ has a free non-abelian 
subgroup~\cite{Tits}, we do not have an algorithm to locate one. 
Indeed, as far as we know, the problem 
of deciding freeness of a finitely generated linear group is not known 
to be decidable.

\subsection{Solvability}

Zassenhaus's theorem~\cite[p.~136]{Supr} implies existence of a bound 
$\delta = \delta(n)$ on the derived length of solvable subgroups of
$\SL(n, \F)$ that depends only on $n$, not on $\F$. 
See, e.g., 
\cite[p.~136]{Supr} 
for an estimate of $\delta$ due to Dixon. 

Let $H\leq \SL(n,\Z)$ be non-solvable.
We sketch a procedure ${\tt PrimesForSolvable}(H,\delta)$ 
that returns the set of primes $p$ such that 
$\varphi_p(H)$ is solvable and $\varphi_p(H) \neq \SL(n, p)$. 
Take a non-trivial iterated commutator in $H$. As usual, we do this by 
random selection in $H$, or by lifting to $H$ from a (non-solvable) 
congruence image:
pick $[\bar{h}_1, \ldots, \bar{h}_{\delta + 1}] \neq 1_n$ in $\bar{H}$;
then $g = [h_1, \ldots, h_{\delta + 1}] \in H$ is as required. 
Let $d$ be the gcd of the non-zero entries of $g-1_n$. 
Then $\varphi_p(H)$ is non-solvable if $p\not \in \pi(d)$. 
Solvability of $\varphi_p(H)$ for $p\in \pi(d)$
can be tested using \cite{gaprecog}. 
We have proved the following.
\begin{lemma}
\label{LemmaBealsTits}
If $H$ is non-solvable then $\varphi_p(H)$ is non-solvable
for almost all primes $p$.
\end{lemma}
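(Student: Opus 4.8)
The plan is to convert non-solvability of $\varphi_p(H)$ into a single matrix inequality, exploiting that Zassenhaus's theorem bounds the derived length of a solvable subgroup of $\SL(n,\F)$ by a constant $\delta = \delta(n)$ \emph{uniformly in the field} $\F$. The point I want to use is precisely this field-independence: one and the same $\delta$ works for every prime field $\F_p$ simultaneously. Consequently, if $\varphi_p(H)$ should happen to be solvable, then its derived length is at most $\delta$, and so its $\delta$-th derived subgroup $\varphi_p(H)^{(\delta)}$ is trivial.

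First I would record the functoriality of derived subgroups: since $\varphi_p$ is a group homomorphism, $\varphi_p(H^{(\delta)}) = \varphi_p(H)^{(\delta)}$. Hence solvability of $\varphi_p(H)$ would force $\varphi_p$ to annihilate the whole of $H^{(\delta)}$. It therefore suffices to exhibit one element of $H^{(\delta)}$ whose reduction modulo $p$ is non-trivial for almost all $p$.

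Next, because $H$ is non-solvable its derived series never terminates, so in particular $H^{(\delta)} \neq 1$. The $\delta$-th derived subgroup is the verbal subgroup generated by the values of the iterated commutator word $s_\delta$ (set $s_0(x) = x$ and $s_k = [s_{k-1}, s_{k-1}]$ on disjoint variables), so $H^{(\delta)} \neq 1$ guarantees a tuple $h_1, \dots$ in $H$ with $g := s_\delta(h_1, \dots) \neq 1_n$. Concretely such a $g$ is produced by random selection, or by lifting a non-trivial iterated commutator from an already-known non-solvable congruence image, exactly as in the description of ${\tt PrimesForSolvable}$ above.

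Finally I would pass to the arithmetic of the single integer matrix $g$. As $g \neq 1_n$, the matrix $g - 1_n$ is a nonzero integer matrix; letting $d$ be the gcd of its nonzero entries, we have $\varphi_p(g) \neq 1_n$ exactly when $p \nmid d$. For every such $p$ — that is, for all but the finitely many $p \in \pi(d)$ — the element $\varphi_p(g)$ is a non-trivial member of $\varphi_p(H)^{(\delta)}$, whence $\varphi_p(H)^{(\delta)} \neq 1$ and $\varphi_p(H)$ is non-solvable. The step carrying the real content, and the one I would be most careful to justify, is the uniformity of $\delta$ over all the fields $\F_p$: it is this field-independence in Zassenhaus's theorem that allows a single witness $g$ and a single finite exceptional set $\pi(d)$, instead of a derived-length bound (and hence a required commutator) that grows with $p$.
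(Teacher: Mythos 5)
Your proof is correct and follows essentially the same route as the paper's: Zassenhaus's field-independent bound $\delta(n)$, a non-trivial iterated commutator $g \in H^{(\delta)}$ (which exists because $H^{(\delta)} \neq 1$), and the observation that $\varphi_p(g) \neq 1_n$ for every prime $p$ outside the finite set $\pi(d)$, where $d$ is the gcd of the non-zero entries of $g - 1_n$. If anything, your explicit use of the derived word $s_\delta$ on $2^\delta$ disjoint variables is more precise than the paper's notation $[h_1, \ldots, h_{\delta+1}]$, which reads like a left-normed commutator (the identity for nilpotency, not solvability) but is clearly intended to mean the same iterated derived commutator you describe.
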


We get better bounds on derived length for irreducible groups in 
prime degree. 
\begin{lemma}
Let $n$ be prime. An irreducible solvable subgroup $G$ of 
$\GL(n,\F)$ has derived length $d\leq 6$. 
\end{lemma}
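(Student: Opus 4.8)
The plan is to use that, $n$ being prime, an irreducible $G \le \GL(n,\F)$ is either monomial or primitive, and to bound the derived length separately in each case. In the monomial case $G$ has an abelian normal diagonal subgroup $D$, and $G/D$ embeds in $\mathrm{Sym}(n)$; irreducibility makes this action transitive, so $G/D$ is a solvable transitive group of prime degree. By Galois's theorem such a group lies in the metabelian affine group $\mathrm{AGL}(1,n) \cong \Z_n \rtimes \Z_{n-1}$. Hence $d(G) \le d(D) + d(G/D) \le 1 + 2 = 3$ in this case.

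For the primitive case I first reduce to absolute irreducibility. If $G$ is not absolutely irreducible then, $n$ being prime, its endomorphism ring $K = \mathrm{End}_{\F G}(V)$ is a field with $[K:\F] = n$; since $G$ is $K$-linear and $\dim_K V = 1$, this forces $G \le \GL_K(V) \cong K^\ast$, so $G$ is abelian and $d(G) \le 1$. Thus I may assume $G$ is absolutely irreducible, and after passing to $\bar\F$ (which does not change the derived length) $G$ is again either monomial, handled above, or primitive; assume the latter. I would then examine the Fitting subgroup $F = F(G)$. Over $\bar\F$, Clifford theory shows that the group $Z$ of scalars is the unique maximal abelian normal subgroup of $G$ --- a larger one would decompose $V$ into proper eigenspace blocks, contradicting primitivity --- so that $Z(F) = Z$. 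The structure theory of primitive solvable linear groups \cite{Supr} then exhibits $F$ as the central product of $Z$ with a single extraspecial group of order $n^3$. In particular $F/Z \cong \Z_n \times \Z_n$ carries a nondegenerate symplectic form and $F$ has nilpotency class $2$, so $d(F) \le 2$.

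To finish I would analyse the conjugation action of $G$ on $F/Z$. Set $C = C_G(F/Z) \trianglelefteq G$. Because $G$ centralizes the scalars $Z \supseteq Z(F)$, the action preserves the symplectic form exactly, giving an embedding $G/C \hookrightarrow \Sp(2,n) = \SL(2,n)$ into genuine symplectic transformations rather than similitudes. Meanwhile the elements of $C$ induce only central automorphisms of $F$, so $C/Z \hookrightarrow \mathrm{Hom}(F/Z, Z)$ is elementary abelian and $d(C) \le 2$. As $G/C$ is a solvable subgroup of $\SL(2,n)$, its derived length is at most $4$: the Borel and torus-normalizer subgroups are metabelian, and the extreme case is the binary octahedral group $2.\mathrm{Sym}(4)$, of derived length $4$. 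Combining the two bounds gives $d(G) \le d(C) + d(G/C) \le 2 + 4 = 6$.

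I expect the primitive case to be the main obstacle. The delicate points are the identification of $F$ as extraspecial-by-scalar and the verification that the induced action lands in $\SL(2,n)$ and not merely in the similitude group $\GL(2,n)$, together with the sharp count showing that a solvable subgroup of $\SL(2,n)$ has derived length at most $4$, with $2.\mathrm{Sym}(4)$ extremal. By comparison the monomial case and the reduction to absolute irreducibility are routine.
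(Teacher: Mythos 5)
Your argument is correct in substance and lands on the bound by exactly the same arithmetic as the paper: $1+2=3$ in the monomial case (identical to the paper's, via Galois's theorem on solvable transitive groups of prime degree) and $2+4=6$ in the primitive case, via a metabelian normal subgroup whose quotient embeds in $\SL(2,n)$. The difference is one of granularity: the paper disposes of the primitive case in a single stroke by citing \cite[Theorem~3.3, p.~42]{Wehrfritz}, which directly supplies $E\unlhd G$ of derived length at most $2$ with $G/E$ embedding in $\SL(2,n)$, over an arbitrary field and with no preliminary reductions; you instead reconstruct that theorem --- reduce to absolute irreducibility (your observation that $\mathrm{End}_{\F G}(V)$ is a division algebra of prime $\F$-dimension, hence a field, is the right one), pass to $\bar\F$, and run the Fitting-subgroup analysis, citing \cite{Supr} for the extraspecial structure of $F$ and doing the symplectic-form and centralizer computations by hand. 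What your version buys is transparency (one sees exactly where $\Sp(2,n)=\SL(2,n)$ enters); what it costs is that two nontrivial points are silently delegated to the citation, and as literally written your text needs them repaired. First, characteristic: if $\mathrm{char}\,\F=n$, the Fitting subgroup of an irreducible subgroup of $\GL(n,\bar\F)$ has no elements of order $n$ (a normal subgroup of unipotent elements has nonzero fixed space), so $F$ cannot have an extraspecial part of order $n^3$; the correct statement is that the primitive absolutely irreducible solvable case is vacuous in characteristic $n$, and that needs its own short argument rather than the structure you assert. Second, $G$ may be infinite, so the Fitting-theoretic facts you use --- that $F/Z$ is finite of order $n^2$, and that the kernel of $C\to\mathrm{Hom}(F/Z,Z)$ is exactly $Z$, i.e.\ $C_G(F)=Z$ --- are not the finite-group statements but rest on the theorem that a primitive solvable subgroup of $\GL(n,\bar\F)$ is scalar-by-finite (Lie--Kolchin applied to the Zariski closure); only then does the finite solvable self-centralizing property of the Fitting subgroup apply. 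Both points are indeed contained in the structure theory of \cite{Supr}, so as a proof-by-citation yours carries the same weight as the paper's, but these are the two places a referee would push.
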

\begin{proof}
A monomial group $G$ is an extension of its subgroup 
of diagonal matrices by a solvable transitive permutation 
group of prime degree. Such permutation groups are metacyclic, 
so $d\leq 3$. Suppose that $G$ is primitive.By
\cite[Theorem~3.3, p.~42]{Wehrfritz}, there 
exists $E\unlhd G$ of derived length at most $2$, such that
$G/E$ is isomorphic to a subgroup of $\SL(2, n)$. 
Since $\delta(\SL(2, n)) \leq 4$ (see, e.g., \cite[\S 21.3]{Supr}), 
we get $d \leq 6$ as required.
\end{proof}
\begin{remark}
If $n = 2$, $3$ and $G\leq \SL(n,\F)$ then 
$d\leq 4$, $d\leq 5$, respectively. 
\end{remark}

\subsection{Isometry}

We say that $G\leq \GL(n,\F)$ is an \emph{isometry group} if
it preserves a non-degenerate bilinear 
(symmetric or alternating) form. On the other hand, 
since $\SL(2,\F) = \Sp(2,\F)$,
we say that $G$ is not an isometry group if $G$ does not preserve 
a non-degenerate bilinear form for $n>2$.
\begin{lemma}\label{Lemma24}
Let $G\leq \GL(n, \F)$ be absolutely irreducible. Then $G$ is an
isometry group if and only if 
$\mathrm{tr}(g) = \mathrm{tr}(g^{-1})$ for all $g \in G$.
\end{lemma}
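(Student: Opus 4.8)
The plan is to reinterpret both conditions in terms of the contragredient (dual) module and then apply Schur's lemma. Let $V = \F^n$ be the natural module for $G$ via $\rho$, and let $V^{*}$ denote the dual module, on which $g \in G$ acts as $(\rho(g)^{-1})^{T}$; its trace function is $g \mapsto \mathrm{tr}(\rho(g)^{-1}) = \mathrm{tr}(g^{-1})$. A bilinear form $B$ on $V$ with Gram matrix $M$ is $G$-invariant precisely when $\rho(g)^{T} M \rho(g) = M$ for all $g$, which is equivalent to saying that the map $v \mapsto B(v, -\,)$ is a homomorphism of $G$-modules $V \to V^{*}$; moreover $B$ is non-degenerate exactly when this map is an isomorphism. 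Thus $G$-invariant bilinear forms are identified with $\mathrm{Hom}_G(V, V^{*})$, and the non-degenerate ones with isomorphisms $V \to V^{*}$.

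The forward implication is then immediate and uses only that $G$ preserves some non-degenerate $B$: the relation $\rho(g)^{T} M \rho(g) = M$ gives $M \rho(g) M^{-1} = (\rho(g)^{-1})^{T}$, so $\mathrm{tr}(g) = \mathrm{tr}((\rho(g)^{-1})^{T}) = \mathrm{tr}(g^{-1})$ for all $g \in G$.

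For the converse, suppose $\mathrm{tr}(g) = \mathrm{tr}(g^{-1})$ for all $g \in G$. By the computation above this says precisely that $V$ and $V^{*}$ have equal trace functions. Since $G$ is absolutely irreducible, so is $V^{*}$, and an absolutely irreducible module is determined up to isomorphism by its trace function (this is the Brauer--Nesbitt theorem; equivalently, by Burnside density the enveloping algebra $\langle G \rangle_{\F}$ equals $M_n(\F)$, so the trace function pins the module down). Hence $V \cong V^{*}$, which under the identification above yields a nonzero $G$-invariant form $B$. The radical of $B$ is a proper $G$-submodule of $V$, so it is zero by irreducibility, and $B$ is non-degenerate. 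The step I expect to need the most care is exactly this deduction of $V \cong V^{*}$ from equality of traces, since over a field of positive characteristic it rests on Brauer--Nesbitt rather than ordinary character theory, and one must make sure the trace equality on group elements extends $\F$-linearly to $\langle G \rangle_{\F}$.

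It remains to arrange that $B$ is symmetric or alternating, and here the dichotomy falls out formally from Schur's lemma. Because $V$ and $V^{*}$ are absolutely irreducible, $\dim_{\F} \mathrm{Hom}_G(V, V^{*}) \leq 1$, so the invariant form $B$ is unique up to a scalar. The transposed form $B^{T}(v,w) := B(w,v)$ is again $G$-invariant, whence $B^{T} = \lambda B$ for some $\lambda \in \F$; applying transposition twice gives $\lambda^{2} = 1$. If $\mathrm{char}\,\F \neq 2$ then $\lambda = 1$ (so $B$ is symmetric) or $\lambda = -1$ (so $B$ is alternating), while if $\mathrm{char}\,\F = 2$ then $\lambda = 1$ forces $B$ symmetric. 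In every case $B$ is of the required type, so $G$ is an isometry group.
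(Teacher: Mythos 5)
Your proof is correct and takes essentially the same approach as the paper's: the key step in both is that equality of the trace functions of the natural and contragredient representations, combined with absolute irreducibility, forces these two representations to be equivalent (Brauer--Nesbitt), and the intertwining matrix $\Phi$ is then the Gram matrix of an invariant non-degenerate form. You are somewhat more thorough than the paper: you also write out the easy forward direction, and you add the Schur's-lemma argument ($B^{\top}=\lambda B$, $\lambda^{2}=1$) showing that the invariant form may be taken symmetric or alternating, a point the paper's proof leaves implicit.
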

\begin{proof}
Suppose that $\mathrm{tr}(g) = \mathrm{tr}(g^{-1})$ for all $g \in G$. 
As their characters are equal, the identity and contragredient 
representations of $G$ are therefore equivalent;
i.e., $g = \Phi (g^\top)^{-1} \Phi^{-1}$ for some 
$\Phi\in \allowbreak \GL(n,\F)$.
Rearranging this equality, we see that $G$ preserves the form
with matrix $\Phi$.
\end{proof}

The procedure ${\tt PrimesForIsometry}$ accepts an 
absolutely irreducible subgroup $H$ of $\SL(n,\Z)$ that is not an isometry group.
It selects $h\in H$ such that $a:=\mathrm{tr}(h) - \mathrm{tr}(h^{-1}) \neq 0$,  
and (using \cite{gaprecog}) 
returns those $p \in \pi(a)$ such that $\varphi_p(H)$ is an isometry group.

We will need to check not only whether
a congruence image of $H$ preserves a form, but whether it 
lies in the similarity group generated by 
a full isometry group and all scalars.
This is achieved with ${\tt PrimesForSimilarity}(H)$, which
selects $h=[h_1,h_2]\in H$ such that 
$a:=\mathrm{tr}(h) - \mathrm{tr}(h^{-1}) \neq 0$. 
Clearly $\varphi_p(H)$ is in a similarity group only if $p \in \pi(a)$.
\begin{lemma}
\label{BilinearReduction} 
Suppose that $H\leq \SL(n,\Z)$ is absolutely irreducible and 
not an isometry group. Then for almost all primes $p$, 
$\varphi_p(H)$ does not lie
in a similarity group over $\Z_p$.
\end{lemma}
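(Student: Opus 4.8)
The plan is to justify the reduction on which ${\tt PrimesForSimilarity}$ rests: once I have a commutator $h=[h_1,h_2]\in H$ with $a:=\mathrm{tr}(h)-\mathrm{tr}(h^{-1})\neq 0$, I would show that $\varphi_p(H)$ can lie in a similarity group only when $p\in\pi(a)$. Suppose $\varphi_p(H)$ is contained in the similarity group of a nondegenerate form with matrix $\Phi$ over $\Z_p$, so that $g^\top\Phi g=\lambda(g)\Phi$ for a scalar $\lambda(g)\in\Z_p^{\times}$ attached to each $g\in\varphi_p(H)$. A direct computation with $(g_1g_2)^\top\Phi(g_1g_2)$ shows $\lambda$ is a homomorphism into the abelian group $\Z_p^{\times}$; hence $\lambda$ kills commutators, and $\varphi_p(h)=[\varphi_p(h_1),\varphi_p(h_2)]$ satisfies $\varphi_p(h)^\top\Phi\,\varphi_p(h)=\Phi$. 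Thus $\varphi_p(h)$ is a genuine isometry, and the argument of Lemma~\ref{Lemma24} (namely $g^\top=\Phi g^{-1}\Phi^{-1}$) gives $\mathrm{tr}(\varphi_p(h))=\mathrm{tr}(\varphi_p(h)^{-1})$. Since trace and inversion commute with $\varphi_p$, this says precisely that $a\equiv 0\pmod p$, i.e.\ $p\in\pi(a)$. As $\pi(a)$ is finite, the lemma follows.

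It is essential that the witnessing element be a commutator: a general similarity $g$ obeys only $\mathrm{tr}(g^{-1})=\lambda(g)^{-1}\mathrm{tr}(g)$, so the trace test detects nothing unless $\lambda(g)=1$, and it is precisely the commutators on which $\lambda$ is forced to be trivial. Consequently the one substantial point is the existence of a commutator $h=[h_1,h_2]$ with $a\neq 0$; this is the step I expect to be the main obstacle, since absolute irreducibility together with ``not an isometry group'' is not by itself enough. (If the contragredient satisfies $\rho^{*}\cong\rho\otimes\chi$ for a nontrivial character $\chi$ of $H$, then $H$ is itself a similarity group with factor $\chi^{-1}$, every commutator is trace-balanced, and no such $h$ exists.) I would rule this out using the standing hypothesis that $H$ is Zariski dense in $\SL(n,\R)$; note first that $n\geq 3$, because for $n=2$ every $g\in\SL(2,\F)$ has $\mathrm{tr}(g)=\mathrm{tr}(g^{-1})$ by Cayley--Hamilton, so a dense subgroup of $\SL(2,\R)$ is an isometry group and the hypothesis excludes this case.

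To produce the commutator, I would consider the regular function $f(g)=\mathrm{tr}(g)-\mathrm{tr}(g^{-1})$ on $\SL(n,\C)$. If $f$ vanished on every commutator of $H$, then by Zariski density of $H$ the closed condition $f([x,y])=0$ would hold for all $x,y\in\SL(n,\C)$, i.e.\ $f$ would vanish on the entire image of the commutator map. But $\SL(n,\C)$ is connected semisimple, so its commutator map is dominant and its image is dense; meanwhile $\{g:f(g)\neq 0\}$ is a nonempty open set for $n\geq 3$ (a generic diagonal matrix has $\mathrm{tr}\neq\mathrm{tr}^{-1}$). These two sets must meet, contradicting the vanishing of $f$ on all commutators. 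Hence some commutator $h=[h_1,h_2]$ of $H$ has $f(h)\neq 0$, and the first paragraph applies. The delicate point throughout is this passage from ``not an isometry group'' to the existence of a \emph{commutator} witnessing it; density is exactly what upgrades the (possibly non-commutator) witness supplied by Lemma~\ref{Lemma24} into one of commutator type.
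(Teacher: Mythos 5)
Your first paragraph reproduces, correctly and in full detail, what is in effect the paper's entire proof: the paper attaches no proof to Lemma~\ref{BilinearReduction}, relying instead on the sentence preceding it --- that ${\tt PrimesForSimilarity}$ selects $h=[h_1,h_2]$ with $a=\mathrm{tr}(h)-\mathrm{tr}(h^{-1})\neq 0$, and that ``clearly'' $\varphi_p(H)$ is in a similarity group only if $p\in\pi(a)$. Your multiplier-is-a-character computation is exactly what that ``clearly'' compresses, and your version even handles the full similitude group $\{g:g^\top\Phi g=\lambda\Phi\}$, which contains the paper's similarity group (isometries times scalars), so this part is correct and slightly stronger than needed.

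Where you go beyond the paper is in the existence of the witness, which the paper tacitly assumes the procedure can find. Your diagnosis is right and worth emphasizing: absolute irreducibility plus ``not an isometry group'' does not by itself produce such a commutator, and the lemma as literally stated is in fact false in even degree. Your parenthetical scenario $\rho^*\cong\rho\otimes\chi$ is realizable over $\Z$: take $H=\langle\Sp(4,\Z),g\rangle\leq\SL(4,\Z)$ with $g$ the permutation matrix of $(1\,3)(2\,4)$, so that $g^\top Jg=-J$ for the standard symplectic form $J$. This $H$ is absolutely irreducible and preserves no nondegenerate form (any invariant form is a scalar multiple of $J$ by Schur's lemma, and $g$ negates $J$), yet $\varphi_p(H)$ lies in the similitude group of $J$ over $\Z_p$ for every odd $p$, and in the paper's narrower similarity group whenever $-1$ is a square modulo $p$. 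So an extra hypothesis is genuinely needed, and your appeal to Zariski density --- with dominance of the commutator map on $\SL(n,\C)$ and $f\not\equiv 0$ for $n\geq 3$ --- closes the gap correctly. One structural caveat: the paper feeds Lemma~\ref{BilinearReduction} into Lemma~\ref{LemmaA}, which underpins the implication ``irreducible, not solvable-by-finite, not an isometry group implies dense'' in Subsection~\ref{DenseInPrimeDegree}; proving the lemma \emph{from} density makes that implication circular. A density-free argument exists in the paper's actual setting ($n$ an odd prime, $H$ not solvable-by-finite): Clifford theory then forces $H'=[H,H]$ to be absolutely irreducible (the monomial and scalar alternatives would make $H$ solvable or abelian), and if every element of $H'$ were trace-balanced, Lemma~\ref{Lemma24} would make $H'$ an isometry group, Schur's lemma would place $H$ in a rational similitude group, and the multiplier $\lambda$ would satisfy $\lambda^n=(\det)^2=1$, hence $\lambda=1$ for odd $n$, making $H$ an isometry group --- a contradiction. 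Either route repairs the lemma, but the density-free one is what the paper's later equivalences actually require.
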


\section{Algorithms for strong approximation}\label{AlgorithmsSA}

We proceed to formulate an algorithm that realizes strong approximation
in prime degree $n$. That is, the algorithm computes $\Pi(H)$ for any
dense input $H\leq \SL(n, \Z)$. We also compute $\Pi$ 
for dense subgroups of $\SL(2n, \Z)$  containing a transvection. 

\subsection{Density in prime degree}\label{DenseInPrimeDegree}

For the entirety of this subsection, $n$ is prime. 

By \cite{Aschbacher84} (cf \cite[p.~397]{LubotzkySegal}), 
the set $\mathcal{C}$ of maximal subgroups of $\SL(n, p)$ is a union 
of certain subsets 
$\mathcal{C}_1, \ldots , \mathcal{C}_9$.
For each $i$, we determine all primes 
$p$ such that $\varphi_p(H)$ could be in a group in $\mathcal{C}_i$. 
Hence, we provide criteria for $H$
to surject onto $\SL(n, p)$ for almost all primes $p$. 
These conditions turn out to be equivalent to 
density. They constitute the background of our main algorithm 
and obviate any need to test density of the input 
(as in, say, \cite[Section~5]{Density}). 

We start with an auxiliary statement for $\mathcal{C}_9$
(called class ${\mathcal S}$ in~\cite[Chapter 8]{Bray}).
\begin{lemma}
\label{primdegrep}
There is a bound in terms of $n$ on the order of subgroups 
of $\SL(n,p)$ that are contained solely in groups in 
${\mathcal C}_9$.
\end{lemma}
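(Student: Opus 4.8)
The plan is to reduce the statement to a bound on the order of the socle of an overgroup. A subgroup $G$ contained solely in members of $\mathcal{C}_9$ lies in some maximal $M\in\mathcal{C}_9$, which is almost simple with socle a non-abelian simple group $T$ acting absolutely irreducibly in dimension $n$. Since $M\leq\mathrm{Aut}(T)$ and $|\mathrm{Aut}(T)|$ is bounded in terms of $|T|$, it suffices to bound $|T|$ by a function of $n$ alone. I will also record one consequence of the hypothesis for later use: as $G$ lies in no member of $\mathcal{C}_8$, by Lemma~\ref{Lemma24} the $n$-dimensional module for the (absolutely irreducible) group $T$ is not self-dual.

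I would then split into cases according to the type of $T$. If $T$ is alternating or sporadic, then for each fixed $n$ only finitely many such groups possess a projective representation of degree $n$, so $|T|$ is bounded. If $T$ is of Lie type in a characteristic different from $p$, the Landazuri--Seitz--Zalesskii lower bounds on the degrees of faithful projective representations force the defining field and the Lie rank of $T$ to be bounded in terms of $n$; hence $|T|$ is again bounded. In both cases the bound depends only on $n$.

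The main obstacle is the case where $T$ is of Lie type in the \emph{defining} characteristic $p$, for here the order of $T$ is a priori unbounded. My approach is to apply Steinberg's tensor product theorem: an absolutely irreducible module is a tensor product of Frobenius twists of restricted irreducibles, and as its dimension $n$ is prime it must be a single twist of a restricted irreducible $V_\lambda$ with $\dim V_\lambda=n$. Twisting alters neither the dimension nor $T$, so I may take $\lambda$ restricted. If $V_\lambda$ were self-dual then $T$ would preserve a nondegenerate form and $G$ would lie in a member of $\mathcal{C}_8$, contrary to hypothesis; thus $V_\lambda$ is non-self-dual. Non-self-dual irreducibles occur only in Lie types $A_m$ ($m\geq 2$), $D_\ell$ ($\ell$ odd), and $E_6$. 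The non-self-dual modules in type $D_\ell$ are the half-spin modules, of dimension $2^{\ell-1}$, never an odd prime; those in type $E_6$ have composite dimension (the smallest being $27$). This reduces the analysis to type $A_m$.

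The crux is then a dimension count in type $A_m$. Passing to the generic (Weyl-dimension) value of $\dim V_\lambda$, the fundamental modules have dimensions $\binom{m+1}{k}$, the symmetric powers have dimensions $\binom{m+c}{m}$, and any weight with two or more nonzero coordinates yields a strictly larger value; an elementary argument shows that such binomial coefficients are prime only for the natural and dual modules $\omega_1,\omega_m$, of dimension $m+1$. Hence a non-self-dual restricted module of prime dimension $n$ over $\F_p$ with $p$ large forces $m+1=n$ and $T=\PSL(n,p)$ acting via its natural module, so that $M=\SL(n,p)$ is the whole group and not a proper member of $\mathcal{C}_9$ --- a contradiction. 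The finitely many small primes $p$ excluded by the generic argument give groups of bounded rank over a bounded field, hence of order bounded in terms of $n$. This disposes of the defining-characteristic case and completes the proof. The delicate point throughout is precisely this last case, where Steinberg's theorem, the self-dual dichotomy, and the scarcity of prime binomial coefficients together rule out the unbounded families.
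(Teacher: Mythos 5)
Your route is genuinely different from the paper's: the paper reduces to the perfect residuum and then quotes the Malle--Zalesskii classification of prime power degree representations of quasisimple groups \cite{MalleZaleskii}, supplemented by \cite{James83,KleshchevTiep12} for the large alternating groups that classification excludes. You instead try to re-derive the hard part of that classification by hand. Your reduction to the socle, and your treatment of the sporadic, alternating, and cross-characteristic Lie type cases (minimal-degree growth, Landazuri--Seitz--Zalesskii) are sound and essentially parallel to what the cited classification encodes. But the defining-characteristic case --- which you yourself flag as the crux --- has genuine gaps, and they sit exactly where the lemma could fail: a single non-self-dual highest weight of prime (Weyl) dimension $n$ in any type other than $A_{n-1}$ with $\lambda=\omega_1,\omega_{n-1}$ would yield subgroups $T(p)\leq\SL(n,p)$ of unbounded order lying only in $\mathcal{C}_9$, contradicting the statement.

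Concretely: (a) the claim that the only non-self-dual modules in type $D_\ell$ ($\ell$ odd) are the half-spin modules is false --- every highest weight whose coefficients on the two spin nodes differ is non-self-dual, e.g.\ $L(2\omega_\ell)$, whose dual is $L(2\omega_{\ell-1})$; and for $E_6$ you must prove that \emph{all} non-self-dual modules have composite dimension, not observe that the smallest one ($27$) does (central-character divisibility arguments do not even cover the non-self-dual weights lying in the root lattice). So the reduction to type $A_m$ is broken. (b) Within type $A_m$, your elementary argument shows compositeness only for the dimensions $\binom{m+1}{k}$ and $\binom{m+c}{m}$ of exterior and symmetric powers; for a weight with two or more nonzero labels you conclude only that the dimension is \emph{strictly larger}, which is a non sequitur --- larger does not mean composite, and such a dimension could equal the prime $n$. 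What is needed is that no restricted non-self-dual weight other than $\omega_1,\omega_m$ gives a prime-dimensional irreducible, which is essentially the content of the classification you set out to avoid. (c) More repairable, but still missing: substituting the Weyl dimension for $\dim L(\lambda)$ requires a lower bound (Premet's theorem on weights of restricted modules) to bound the coefficients of $\lambda$ in terms of $n$ before linkage confines the exceptional primes; and you never rule out $T=T(p^f)$ with $f>1$, which requires the Galois-stability argument showing that a single twisted restricted factor of prime dimension cannot be defined over $\F_p$ unless $f=1$. Until (a) and (b) are filled --- in practice, by citing \cite{MalleZaleskii} or re-proving its defining-characteristic statements --- the proposal does not establish the lemma.
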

\begin{proof}
Suppose that $U\le\SL(n,p)$ lies only in ${\mathcal C}_9$ and not in
${\mathcal C}_i$ for $i\neq 9$. 
The perfect residuum $U^\infty$ (i.e., the 
last term of the derived series of $U$)
is therefore a simple absolutely irreducible subgroup of $\SL(n, p)$. 
If we show that the order of $U^{\infty}$ is bounded, then 
$U\le\mathrm{Aut}(U^{\infty})$ also has bounded order. Thus, without 
loss of generality,  $U=U^{\infty}$ from now on.

Prime degree faithful representations of quasisimple 
groups are classified in \cite[Theorem~1.1]{MalleZaleskii}. 
The orders of the groups in classes (10)--(27) of this classification
are bounded absolutely (i.e., by a bound not depending on $n$ or $p$). 
The orders of groups in classes (2)--(9) are bounded by a function 
of $n$.

Class (1) groups are of Lie type in characteristic $p$ 
in the Steinberg representation~\cite{SteinbergRep}, whose degree $n$ 
is the $p$-part of the group order. For each class of groups of 
Lie type ${\mathcal G}_m(p)$, this $p$-part is $p^a$ with 
$a \leq 1$ for only finitely many values of $m$. So  class (1) is 
finite for prime $n$.

Finally we come to the case excluded by \cite[Theorem 1.1]{MalleZaleskii},
namely $U/Z(U)\cong \mathrm{Alt}(m)$ for $m>18$. As a consequence
of~\cite{James83,KleshchevTiep12}, there are only finitely many 
degrees $l$ such that $\mathrm{Sym}(l)$ and thus 
$\mathrm{Alt}(l)$ has a faithful (projective) 
representation of degree $m$. 
\end{proof}

The main procedure, ${\tt PrimesForDense}(H)$,  combines
the subsidiary procedures of Section~\ref{SAandRecognitionOfImages}.
Its output is the union of
\begin{itemize}
\item[] \vspace{-12pt}
\item ${\tt PrimesForAbsIrreducible}(H)$ 
\item[] \vspace{-12pt}
\item ${\tt PrimesForMonomial}(H)$
\item[] \vspace{-12pt}
\item ${\tt PrimesForSolvable}(H, \delta)$, where $\delta$ is a bound on the
derived length of a solvable linear group of degree $n$
\item[] \vspace{-12pt}
\item ${\tt PrimesForSimilarity}(H)$
\item[] \vspace{-12pt}
\item ${\tt PrimesForOrder}(H, k)$ where $k$ is a bound on element orders
 for groups of degree $n$ in ${\mathcal C}_6 \cup {\mathcal C}_9$.
\item[] \vspace{-12pt}
\end{itemize}
\begin{theorem}
Assuming termination for input $H$, ${\tt PrimesForDense}(H)$ 
returns $\Pi(H)$.
\end{theorem}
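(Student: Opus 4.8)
The plan is to reduce the claim to the Aschbacher classification and then match each surviving class of maximal subgroups to one of the subsidiary procedures. First I would record the crucial equivalence: $p\in\Pi(H)$ iff $\varphi_p(H)$ is a proper subgroup of $\SL(n,p)$, iff $\varphi_p(H)$ lies in some maximal subgroup $M$, which by Aschbacher belongs to $\mathcal{C}=\mathcal{C}_1\cup\cdots\cup\mathcal{C}_9$. Density of $H$ guarantees (strong approximation) that this happens for only finitely many $p$, so $\Pi(H)$ is finite; density also supplies the hypotheses needed to run the procedures, namely that $H$ is infinite, absolutely irreducible, non-solvable and indeed not solvable-by-finite (it contains a non-abelian free subgroup by the Tits alternative), and, for $n\ge 3$, not an isometry group. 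Because $n$ is prime, the tensor class $\mathcal{C}_4$, the tensor-induced class $\mathcal{C}_7$, and (over the prime field $\F_p$) the subfield class $\mathcal{C}_5$ are all empty, so only $\mathcal{C}_1,\mathcal{C}_2,\mathcal{C}_3,\mathcal{C}_6,\mathcal{C}_8,\mathcal{C}_9$ remain.

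The heart of the argument is the inclusion $p\in\Pi(H)\Rightarrow p$ is output, by a case analysis on the class of a maximal overgroup $M$ of $\varphi_p(H)$. If $M\in\mathcal{C}_1$ then $\varphi_p(H)$ is reducible, hence not absolutely irreducible, and $p$ is returned by ${\tt PrimesForAbsIrreducible}$. If $M\in\mathcal{C}_2$ and $\varphi_p(H)$ is irreducible then, $n$ being prime, every block system has $n$ blocks of dimension $1$, so $\varphi_p(H)$ is monomial and $p$ is returned by ${\tt PrimesForMonomial}$ (applicable since $H$ is not solvable-by-finite, Lemma~\ref{Lemma21}); if instead $\varphi_p(H)$ is reducible we are back in the $\mathcal{C}_1$ case. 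The delicate class is $\mathcal{C}_3$: for prime $n$ the field-extension subgroup is $\Gamma\mathrm{L}(1,p^n)\cap\SL(n,p)$, whose commuting algebra is $\F_p$, so it is absolutely irreducible and primitive, hence invisible to the two procedures above; but it is an extension of the cyclic Singer torus by a cyclic Galois group, hence solvable, so $p$ is returned by ${\tt PrimesForSolvable}$. If $M\in\mathcal{C}_8$ then $\varphi_p(H)$ preserves a form up to scalars, i.e.\ lies in a similarity group, and $p$ is returned by ${\tt PrimesForSimilarity}$ (for $n=2$ this class is absorbed by $\SL(2,p)=\Sp(2,p)$ and is not needed). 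Finally, if every maximal subgroup containing $\varphi_p(H)$ lies in $\mathcal{C}_6\cup\mathcal{C}_9$, then $|\varphi_p(H)|$ is bounded by a function of $n$ alone — the bound for $\mathcal{C}_6$ being intrinsic to extraspecial normalizers in prime degree, and the bound for $\mathcal{C}_9$ coming from Lemma~\ref{primdegrep} — so with $k$ chosen as that bound, $p$ is returned by ${\tt PrimesForOrder}$. If $\varphi_p(H)$ lies in such a group but also in a group from one of the other classes, the corresponding earlier procedure already catches $p$.

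For the reverse inclusion I would check that every prime produced by a subsidiary procedure genuinely lies in $\Pi(H)$. Here $\SL(n,p)$ is itself absolutely irreducible, primitive (non-monomial), non-solvable, of order exceeding $k$, and — for $n\ge 3$ — preserves no non-degenerate form, so each detected property forces $\varphi_p(H)\neq\SL(n,p)$. The few tiny exceptions such as $\SL(2,2)$ and $\SL(2,3)$ are handled by the explicit membership, solvability, and monomiality tests built into the procedures; this is precisely why each procedure first reduces to a finite candidate set $\pi(d)$ and then verifies those primes individually.

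The main obstacle is the verification underlying the case analysis rather than any computation: one must confirm that in prime degree the six surviving Aschbacher classes are exactly accounted for, and in particular that $\mathcal{C}_3$ is absolutely irreducible yet solvable (so that only ${\tt PrimesForSolvable}$ detects it) and that $\mathcal{C}_6\cup\mathcal{C}_9$ admits a uniform order bound. The latter rests on Lemma~\ref{primdegrep}, whose proof draws on the classification of prime-degree representations of quasisimple groups; this classification input is the genuinely hard ingredient, while everything else is bookkeeping against the Aschbacher list.
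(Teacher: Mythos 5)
Your proposal is correct and takes essentially the same route as the paper's proof: a class-by-class analysis against the Aschbacher classification, matching $\mathcal{C}_1$, $\mathcal{C}_2$, $\mathcal{C}_3$, $\mathcal{C}_8$ to the absolute-irreducibility, monomiality, solvability, and similarity tests respectively, dismissing $\mathcal{C}_4$, $\mathcal{C}_5$, $\mathcal{C}_7$ as empty in prime degree, and handling $\mathcal{C}_6\cup\mathcal{C}_9$ via ${\tt PrimesForOrder}$ with the bound coming from Lemma~\ref{primdegrep}. Your additional details (the explicit $\Gamma\mathrm{L}(1,p^n)$ structure of the $\mathcal{C}_3$ stabilizers, and the elaboration of why every returned prime genuinely lies in $\Pi(H)$) merely flesh out steps the paper treats tersely.
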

\begin{proof}
Each prime returned must lie in $\Pi(H)$.
Conversely, let $p$ be a prime such that $\varphi_p(H)\neq \SL(n,p)$.
Then $\varphi_p(H)$ is in a group in some ${\mathcal C}_i$, 
$1\leq i\leq 9$. For each $i$, we show that 
(at least) one of the subsidiary procedures returns $p$.

 ${\mathcal C}_1$:
here $\varphi_p(H)$ is reducible, so $p$ is 
returned by 
${\tt PrimesForAbsIrreducible}(H)$.

${\mathcal C}_2$:
$p$ is returned by ${\tt PrimesForMonomial}(H)$.

 ${\mathcal C}_3$:
for prime $n$, the stabilizers of extension fields
are solvable, so $p$ is
returned by ${\tt PrimesForSolvable}(H, \delta)$.

 ${\mathcal C}_4$,
${\mathcal C}_7$: since
the degree of a tensor product is the product of the factor degrees,
and $n$ is prime, these classes are empty. 

 ${\mathcal C}_5$
is empty over fields of prime size.

 ${\mathcal C}_6$
consists of groups whose structure
depends on $n$ but not on $p$~\cite[Section~2.2.6]{Bray}. The number of such
groups (and thus the largest order of an element in any one of them) is 
bounded, and so ${\tt PrimesForOrder}(H, k)$ returns $p$.

 ${\mathcal C}_8$:
the groups in this class preserve a form modulo $Z(\SL(n,p))$. Hence
the derived group of $\varphi_p(H)$ preserves a form and $p$ is returned by
${\tt PrimesForSimilarity}(H)$.

 ${\mathcal C}_9$:
by Proposition~\ref{primdegrep}, the number of groups in this 
class is finite.
Thus (as with ${\mathcal C}_6$)
${\tt PrimesForOrder}(H, k)$ returns $p$.
\end{proof}
\begin{remark}
Using {\sf GAP} and tables in \cite[Chapter~8]{Bray}, 
we can calculate bounds on the order of groups 
in ${\mathcal C}_6 \cup {\mathcal C}_9$
(and hence bounds on their element orders) for small $n$. 
For $n=2$, $3$, $5$, $7$, $11$,
these bounds are $10$, $21$, $60$, $84$, $253$, 
respectively.
\end{remark}
\begin{remark}
${\tt PrimesForDense}$ simplifies in small degrees. 
If $n \leq 3$ then
the groups in ${\mathcal C}_2$ are solvable, so 
${\tt PrimesForSolvable}$ overrides ${\tt PrimesForMonomial}$. 
In degree $2$, ${\tt PrimesForSimilarity}$ is also redundant.
\end{remark}

If ${\tt PrimesForDense}(H)$ terminates then
$\Pi(H)$ is finite, i.e., $H$ is dense~\cite[p.~3650]{Rivin}. 
Next we prove the converse.
This leads to a characterization of density in $\SL(n,\Z)$.
\begin{lemma}\label{LemmaA}
If $H$ is irreducible, not solvable-by-finite, and 
not an isometry group, then $\Pi(H)$ is finite.
\end{lemma}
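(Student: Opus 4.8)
The plan is to bound $\Pi(H)$ by the Aschbacher dichotomy, reading the correctness argument for ${\tt PrimesForDense}$ as a finiteness statement rather than as a description of an algorithm. For $p\in\Pi(H)$ the image $\varphi_p(H)$ is a proper subgroup of $\SL(n,p)$, hence lies in a maximal subgroup belonging to one of the classes $\mathcal{C}_1,\ldots,\mathcal{C}_9$. I would show that for each $i$ the set of primes $p$ with $\varphi_p(H)$ contained in a member of $\mathcal{C}_i$ is finite; since $n$ is prime several of these classes are vacuous, and the remaining ones are controlled by the lemmas of Section~\ref{SAandRecognitionOfImages}. Combining the nine (mostly finite or empty) contributions then gives that $\Pi(H)$ is finite.

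First I would upgrade the hypothesis \emph{irreducible} to \emph{absolutely irreducible}. As $n$ is prime and $H$ is irreducible over $\Q$, Schur's lemma gives a division algebra $E=\mathrm{End}_{\Q H}(\Q^n)$ with $\dim_\Q E$ dividing $n$; the only alternative to $E=\Q$ is $\dim_\Q E=n$, which (as $n$ is squarefree) forces $E$ to be a field and $\dim_E\Q^n=1$, so that $H\hookrightarrow E^\times$ is abelian, contradicting \emph{not solvable-by-finite}. Thus $H$ is absolutely irreducible, Lemma~\ref{LemmaAbsIrred} applies, and $\mathcal{C}_1$ accounts for only finitely many primes. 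I also note that \emph{not solvable-by-finite} makes $H$ both infinite and non-solvable, facts I will use repeatedly.

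The remaining classes are then dispatched one at a time. For $\mathcal{C}_2$ (imprimitive, equivalently monomial in prime degree) finiteness follows from Lemma~\ref{Lemma21}; for $\mathcal{C}_3$ the relevant stabilizers are solvable in prime degree, so Lemma~\ref{LemmaBealsTits} applies; and $\mathcal{C}_4$, $\mathcal{C}_5$, $\mathcal{C}_7$ are empty for prime $n$ over a prime field. For $\mathcal{C}_8$ the image lies in a similarity group, which, using absolute irreducibility together with the assumption that $H$ is not an isometry group, happens for only finitely many $p$ by Lemma~\ref{BilinearReduction}. Finally $\mathcal{C}_6$ and $\mathcal{C}_9$ consist of groups of bounded order — the bound for $\mathcal{C}_9$ coming from Proposition~\ref{primdegrep} — so that containment of $\varphi_p(H)$ in such a group bounds $|\varphi_p(H)|$; since $H$ is infinite, the construction underlying ${\tt PrimesForOrder}$ shows this can occur for only finitely many $p$.

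The main obstacle I anticipate is the absolute-irreducibility reduction: the Aschbacher machinery and Lemmas~\ref{Lemma24} and~\ref{BilinearReduction} are stated for absolutely irreducible groups, whereas the hypothesis only supplies irreducibility over $\Q$, so the primality of $n$ together with non-solvability must be used carefully to rule out a nontrivial commuting field. Once that is in place, each class reduces to a previously established finiteness lemma, and the only further point needing attention is verifying that membership in $\mathcal{C}_6\cup\mathcal{C}_9$ genuinely bounds element (hence group) orders independently of $p$, which is precisely where Proposition~\ref{primdegrep} is essential.
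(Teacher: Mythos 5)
Your proposal is correct and follows essentially the same route as the paper: the paper's proof is a one-line observation that each constituent output set of ${\tt PrimesForDense}$ is finite by Lemmas~\ref{LemmaAbsIrred}, \ref{Lemma21}, \ref{LemmaBealsTits}, \ref{BilinearReduction} and \ref{primdegrep}, and that observation rests on exactly the Aschbacher class-by-class analysis you spell out. The one place you go beyond the paper is the Schur's-lemma argument upgrading \emph{irreducible} to \emph{absolutely irreducible}: the paper hypothesizes only irreducibility in Lemma~\ref{LemmaA}, yet cites Lemmas~\ref{LemmaAbsIrred} and \ref{BilinearReduction}, both of which assume absolute irreducibility, so this reduction (valid because $n$ is prime and $H$ is non-abelian, as you show) is genuinely needed and is left implicit in the paper. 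Your write-up is thus not only correct but also patches a small gap in the paper's own argument.
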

\begin{proof}
Each constituent output set is finite by 
Lemmas~\ref{LemmaAbsIrred}, \ref{Lemma21}, \ref{LemmaBealsTits}, 
\ref{BilinearReduction}, and \ref{primdegrep}.
\end{proof}

\begin{lemma}\label{LemmaB}
If $H$ is
infinite, non-solvable, primitive, and not an isometry group,
then  $\Pi(H)$ is finite. 
\end{lemma}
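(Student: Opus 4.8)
The plan is to obtain this statement as a corollary of Lemma~\ref{LemmaA}, by verifying that the hypotheses of the latter are satisfied. Two of the three are immediate under the present assumptions: a primitive group is in particular irreducible, and $H$ is assumed not to be an isometry group. The only point requiring argument is that $H$ is \emph{not} solvable-by-finite, since here we are given merely that $H$ is non-solvable, which is formally weaker.

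To close this gap I would argue by contradiction. Suppose that $H$ were solvable-by-finite. Since $n$ is prime throughout this subsection, and $H$ is infinite and primitive (hence irreducible), Lemma~\ref{Lemma21n} forces $H$ to be solvable, contradicting the standing hypothesis that $H$ is non-solvable. Therefore $H$ is not solvable-by-finite, and Lemma~\ref{LemmaA} applies directly to give that $\Pi(H)$ is finite.

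The point to emphasize is that there is essentially no independent obstacle at this stage: the structural content has already been supplied by Lemma~\ref{Lemma21n}, whose proof uses primality of $n$ to show that an infinite primitive solvable-by-finite subgroup of $\SL(n,\Z)$ is in fact solvable. Granted that lemma, the passage from ``non-solvable'' to ``not solvable-by-finite'' is automatic, and the present statement is a one-line reduction to Lemma~\ref{LemmaA}. The only care needed is to record that the infinitude of $H$, which is part of our hypothesis, is precisely what licenses the invocation of Lemma~\ref{Lemma21n}.
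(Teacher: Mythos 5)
Your proposal is correct and is in substance the paper's own argument: the paper proves this lemma by repeating the proof of Lemma~\ref{LemmaA} with Corollary~\ref{Corollary21} substituted for Lemma~\ref{Lemma21}, and that corollary rests on exactly the bridge you make explicit, namely that Lemma~\ref{Lemma21n} upgrades ``infinite, non-solvable, primitive'' (for prime $n$) to ``not solvable-by-finite''. Packaging this as a direct reduction to Lemma~\ref{LemmaA} rather than as a rerun of its proof is an organizational difference only.
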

\begin{proof}
As the previous proof, but relying on Corollary~\ref{Corollary21}
instead of Lemma~\ref{Lemma21}.
\end{proof} 
\begin{lemma}\label{Lemma31nn}
Suppose that $\varphi_p(H) = \SL(n, p)$ for some prime $p$, where 
$p > 3$ if $n=2$. 
Then $H$ is infinite, non-solvable, and 
primitive. Furthermore, $H$ is not an isometry group.
\end{lemma}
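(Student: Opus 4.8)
The plan is to establish the four assertions in turn, using that $\bar H:=\varphi_p(H)=\SL(n,p)$ has all of these properties and pulling each one back to $H$. Infiniteness is immediate from Lemma~\ref{HInfinite}: part~(i) handles $n\ge 3$, and part~(ii) handles $n=2$ because the hypothesis forces $p\ge 5\ge 3$. Non-solvability is equally quick, since a quotient of a solvable group is solvable while $\SL(n,p)$ is non-solvable for precisely the pairs $(n,p)$ allowed here (those for which $\PSL(n,p)$ is simple). Before treating primitivity I would record that $H$ is absolutely irreducible: by the remark following Lemma~\ref{LemmaAbsIrred}, a basis of the enveloping algebra $\langle\bar H\rangle_{\Z_p}=M_n(\Z_p)$ lifts to $n^2$ words that are $\Q$-linearly independent in $\langle H\rangle_\Q$, so $\dim_\Q\langle H\rangle_\Q=n^2$. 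This is what allows us to speak of primitivity and to invoke Lemma~\ref{Lemma24}.

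For primitivity I would argue by contradiction. An imprimitive $H$ stabilizes a decomposition $\Q^n=V_1\oplus\cdots\oplus V_k$ with $k\ge 2$ blocks permuted transitively, and the kernel $H_0$ of the action on the blocks is normal of index dividing $k!\le n!$ and fixes each $V_i$. The key point is that reducibility of $H_0$ descends to the \emph{given} prime $p$: the saturated sublattice $W_1=V_1\cap\Z^n$ reduces to a $\varphi_p(H_0)$-invariant subspace $\bar W_1\le\F_p^n$ of dimension $\dim_\Q V_1$, hence proper and non-zero. Thus $\varphi_p(H_0)$ is a reducible normal subgroup of $\SL(n,p)$. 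As $\SL(n,p)$ is quasisimple for the pairs in play, $\varphi_p(H_0)$ is either all of $\SL(n,p)$, which is absurd since that group is irreducible, or central; in the latter case $|\PSL(n,p)|\le|\SL(n,p)/\varphi_p(H_0)|\le[H:H_0]\le n!$, contradicting $|\PSL(n,p)|>n!$. Hence $H$ is primitive.

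For the isometry assertion, which by the convention of the Isometry subsection is only a genuine condition when $n>2$ (as $\SL(2,\F)=\Sp(2,\F)$), I would once more argue by contradiction. If $H$ preserved a non-degenerate bilinear form with Gram matrix $\Phi$, then $g^{-1}=\Phi^{-1}g^\top\Phi$ yields $\mathrm{tr}(g)=\mathrm{tr}(g^{-1})$ for every $g\in H$, and this identity is preserved by reduction modulo $p$, giving $\mathrm{tr}(\bar g)=\mathrm{tr}(\bar g^{-1})$ for all $\bar g\in\SL(n,p)$. Lemma~\ref{Lemma24} would then make $\SL(n,p)$ an isometry group, i.e.\ contained in a symplectic or orthogonal group; but for $n>2$ those are proper subgroups of $\SL(n,p)$, a contradiction. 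Therefore $H$ is not an isometry group.

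The main obstacle is the primitivity step: the subsidiary results of Section~\ref{SAandRecognitionOfImages} only control congruence images for \emph{almost all} primes, whereas here the conclusion must be drawn at the single fixed prime $p$. The device that resolves this is to reduce one saturated block lattice $W_1=V_1\cap\Z^n$, whose reduction has the correct dimension for every prime, so that reducibility of the block kernel transfers to $\SL(n,p)$ unconditionally; quasisimplicity of $\SL(n,p)$ together with the crude bound $|\PSL(n,p)|>n!$ then closes the argument.
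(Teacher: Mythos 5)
Your proof is correct, and three of the four assertions are handled exactly as the paper handles them: infiniteness via Lemma~\ref{HInfinite}, non-solvability because solvability passes to quotients, absolute irreducibility of $H$ via the basis-lifting remark following Lemma~\ref{LemmaAbsIrred}, and the isometry statement by pushing the trace identity of Lemma~\ref{Lemma24} forward to $\SL(n,p)$, which preserves no form when $n>2$. Where you genuinely diverge is primitivity. The paper's argument is shorter and leans on the standing assumption of Subsection~\ref{DenseInPrimeDegree} that $n$ is prime: an irreducible group of prime degree is either primitive or monomial, and a monomial subgroup of $\SL(n,\Z)$ has an \emph{abelian} normal subgroup (its diagonal part) of index at most $n!$; since abelian-ness passes trivially through $\varphi_p$, its image would be an abelian, hence central, normal subgroup of $\SL(n,p)$ of index at most $n!$, which is impossible. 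You instead take an arbitrary imprimitivity decomposition and transport the property ``reducible'' (of the block kernel $H_0$) rather than ``abelian''; since reducibility does not automatically descend to congruence images, you need the saturated-lattice device $W_1=V_1\cap\Z^n$ to produce a $\varphi_p(H_0)$-invariant proper subspace at the given prime, after which quasisimplicity and the counting $|\PSL(n,p)|>n!$ finish. Your route costs the extra lattice argument but buys independence from the prime-degree dichotomy: it proves primitivity for arbitrary $n$ (whenever $\SL(n,p)$ is quasisimple), whereas the paper's proof as written is tied to $n$ prime. Both arguments ultimately run on the same engine: a normal subgroup of index at most $n!$ in $H$ would force a proper normal, hence central, subgroup of too-small index in the quasisimple group $\SL(n,p)$.
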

\begin{proof}
Since $\SL(n, p)$ is absolutely irreducible and 
non-solvable, the same is true of $H$. A monomial subgroup of $\SL(n,\Z)$ 
cannot 
surject onto $\SL(n,p)$ because it has an abelian normal subgroup
whose index is too small. The remaining assertion 
follows from Lemmas~\ref{HInfinite} and \ref{Lemma24}.
\end{proof}

Lemmas~\ref{LemmaB} and \ref{Lemma31nn} yield
\begin{corollary}
\label{Corollary32}
If $\varphi_q(H) = \SL(n, q)$ for one prime $q > 3$, 
then $\varphi_p(H) = \SL(n, p)$ for almost all primes $p$.
\end{corollary}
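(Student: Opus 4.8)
The plan is to chain the two cited results, so the corollary becomes essentially a bookkeeping consequence. First I would apply Lemma~\ref{Lemma31nn} with $p = q$. The hypothesis of that lemma requires $\varphi_p(H) = \SL(n,p)$ together with the side condition ``$p > 3$ if $n = 2$''. Since we are given $\varphi_q(H) = \SL(n,q)$ with $q > 3$, this side condition holds automatically: if $n = 2$ it is exactly what is assumed, and if $n \geq 3$ it is vacuous (and in any case $q > 3$ satisfies it). Hence Lemma~\ref{Lemma31nn} applies and tells us that $H$ is infinite, non-solvable, and primitive, and that $H$ is not an isometry group.

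These four conclusions are precisely the four hypotheses of Lemma~\ref{LemmaB}. So the second step is simply to feed them into Lemma~\ref{LemmaB}, which yields that $\Pi(H)$ is finite. The final step is to unwind the definition of $\Pi(H)$ as the set of primes $p$ with $\varphi_p(H) \neq \SL(n,p)$: finiteness of $\Pi(H)$ says exactly that $\varphi_p(H) = \SL(n,p)$ for all primes $p$ lying outside a fixed finite set, i.e., for almost all primes $p$. This is the assertion of the corollary.

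There is no genuinely hard step here; the substance has already been absorbed into Lemmas~\ref{Lemma31nn} and~\ref{LemmaB}. The one point that warrants care is the degree dichotomy hidden in the bound $q > 3$. This bound is present solely to discharge the $n = 2$ case of Lemma~\ref{Lemma31nn}, where one needs the image $\SL(2,q)$ to be large enough to force $H$ infinite and non-solvable (compare Lemma~\ref{HInfinite}(ii), which fails for $q = 2, 3$). For $n \geq 3$ the hypothesis $\varphi_q(H) = \SL(n,q)$ alone already forces infiniteness via Lemma~\ref{HInfinite}(i), so the restriction $q > 3$ is not strictly necessary there but does no harm. Thus the only thing I would verify explicitly is that $q > 3$ is used only as input to Lemma~\ref{Lemma31nn} and nowhere else in the chain, after which the argument closes.
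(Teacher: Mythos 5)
Your proposal is correct and follows exactly the paper's route: the paper proves this corollary by simply citing Lemmas~\ref{Lemma31nn} and~\ref{LemmaB} in precisely the chain you describe (Lemma~\ref{Lemma31nn} at the prime $q$ supplies the four hypotheses of Lemma~\ref{LemmaB}, whose conclusion that $\Pi(H)$ is finite is the statement). Your added observation that the condition $q>3$ is needed only to discharge the $n=2$ case of Lemma~\ref{Lemma31nn} is accurate and consistent with the paper.
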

\begin{remark}
Corollary~\ref{Corollary32} should be compared with 
\cite[~p.~396]{LubotzkySegal}, \cite[Proposition~1]{Lubotzky97}, 
and \cite{Weigel}.
\end{remark}

\begin{corollary}
The following are equivalent.
\begin{itemize}
\item[{\rm (i)}] $H$ is dense.
\item[{\rm (ii)}] $H$ surjects onto $\SL(n,p)$ 
modulo some prime $p$, where $p > 3$ if $n=2$.
 \item[{\rm (iii)}] $H$ is infinite, 
non-solvable, primitive,  and not an isometry group. 
\item[{\rm (iv)}] $H$ is irreducible, not solvable-by-finite,
and not an isometry group.
\end{itemize}
\end{corollary}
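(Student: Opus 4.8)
The plan is to prove all the equivalences through a single cycle of implications $\mathrm{(i)} \Rightarrow \mathrm{(ii)} \Rightarrow \mathrm{(iii)} \Rightarrow \mathrm{(iv)} \Rightarrow \mathrm{(i)}$, each arrow being supplied by a lemma already established in this section; only the step $\mathrm{(iii)} \Rightarrow \mathrm{(iv)}$ requires an additional (short) argument.

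Three of the four arrows are essentially citations. For $\mathrm{(i)} \Rightarrow \mathrm{(ii)}$, recall that $H$ is dense exactly when $\Pi(H)$ is finite (strong approximation, with the converse from \cite[p.~3650]{Rivin}); thus $\varphi_p(H) = \SL(n,p)$ for almost all $p$, so in particular for some prime $p > 3$, giving $\mathrm{(ii)}$. The arrow $\mathrm{(ii)} \Rightarrow \mathrm{(iii)}$ is Lemma~\ref{Lemma31nn}, whose hypothesis is literally $\mathrm{(ii)}$ and whose conclusion is $\mathrm{(iii)}$. Finally, $\mathrm{(iv)} \Rightarrow \mathrm{(i)}$ is Lemma~\ref{LemmaA}: an irreducible $H$ that is neither solvable-by-finite nor an isometry group has finite $\Pi(H)$, hence is dense.

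The one step carrying genuine content is $\mathrm{(iii)} \Rightarrow \mathrm{(iv)}$, which I expect to be the main (if modest) obstacle, since it demands that a non-solvable primitive group cannot be solvable-by-finite. Here the non-isometry clause is common to both conditions, and primitivity of $H$ entails its irreducibility, so it remains only to deduce that $H$ is not solvable-by-finite. This is precisely the contrapositive of Lemma~\ref{Lemma21n}: if the infinite, primitive, irreducible group $H$ were solvable-by-finite, that lemma (for prime $n$) would force $H$ to be solvable, contradicting $\mathrm{(iii)}$. With this arrow in place the cycle closes, yielding the stated equivalence of all four conditions.
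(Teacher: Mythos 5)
Your proposal is correct and is essentially the proof the paper intends: the corollary is stated without an explicit proof, precisely because it is the assembly of the surrounding results, and your cycle $\mathrm{(i)}\Rightarrow\mathrm{(ii)}\Rightarrow\mathrm{(iii)}\Rightarrow\mathrm{(iv)}\Rightarrow\mathrm{(i)}$ uses exactly those ingredients (strong approximation plus Rivin's converse, Lemma~\ref{Lemma31nn}, and Lemma~\ref{LemmaA}). Your one substantive addition, deducing $\mathrm{(iii)}\Rightarrow\mathrm{(iv)}$ from the contrapositive of Lemma~\ref{Lemma21n} (valid here since $n$ is prime throughout the subsection and primitivity subsumes irreducibility in the paper's conventions), is the step the paper leaves implicit, and you have filled it correctly.
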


\begin{remark}
Let $n = 2$. Then $H$ is dense if and only if $H$ is not
solvable-by-finite; which is equivalent to $H$ being infinite and 
non-solvable.
\end{remark}

To round out the subsection, we give one more set of criteria for
density in odd prime degree.
\begin{lemma}\label{Lemma32n} 
Let $n > 2$. If $H$ contains an irreducible element and is not 
solvable-by-finite then $H$ is dense.
\end{lemma}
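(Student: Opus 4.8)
The plan is to verify that $H$ satisfies condition (iv) of the preceding corollary---that $H$ is irreducible, not solvable-by-finite, and not an isometry group---and then to invoke Lemma~\ref{LemmaA} to conclude that $\Pi(H)$ is finite, i.e.\ that $H$ is dense. Two of the three requirements are immediate. The hypothesis gives directly that $H$ is not solvable-by-finite, and $H$ is irreducible because it contains an irreducible element $g$: any $H$-invariant subspace of $\Q^n$ is in particular $\langle g\rangle$-invariant, hence $0$ or everything. So the whole content of the lemma is concentrated in showing that $H$ is not an isometry group.

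For that step I would argue by contradiction, playing the irreducible element $g$ off against the parity of $n$. Suppose $H$ preserved a non-degenerate bilinear form with matrix $B$, so that $g^\top B g = B$, i.e.\ $g^\top = B g^{-1} B^{-1}$. Then $g^{-1}$ is conjugate to $g^\top$, and since $g$ and $g^\top$ always share a characteristic polynomial, $g$ and $g^{-1}$ would have the same characteristic polynomial $f$. By the irreducibility of $g$, the polynomial $f$ is irreducible over $\Q$ of degree $n$, so its roots---the eigenvalues $\lambda_1,\dots,\lambda_n$ of $g$---are distinct; equality of the characteristic polynomials of $g$ and $g^{-1}$ then forces the root multiset $\{\lambda_1,\dots,\lambda_n\}$ to be closed under $\lambda\mapsto\lambda^{-1}$.

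The punchline is the odd degree. The map $\lambda\mapsto\lambda^{-1}$ is an involution of the $n$-element set of roots, and $n$ is odd (we are in the prime-degree subsection and $n>2$), so the involution must fix some root $\lambda$; then $\lambda=\lambda^{-1}$, i.e.\ $\lambda=\pm 1$. But a rational root is impossible for the irreducible polynomial $f$ of degree $n>1$, a contradiction. Hence no invariant non-degenerate form exists, $H$ is not an isometry group, and Lemma~\ref{LemmaA} applies.

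I expect the eigenvalue/parity step to be the crux: it is the only place where the hypothesis ``contains an irreducible element'' meets ``$n$ odd,'' and it is exactly what fails in even degree, where an inverse-closed eigenvalue set (and hence a symplectic or orthogonal invariant form) is entirely compatible with an irreducible element. One minor point to dispatch along the way is that for the irreducible group $H$ every nonzero invariant form is automatically non-degenerate---so ``isometry group'' really is the obstruction being ruled out---but this is immediate, since the radical of an invariant form is an $H$-invariant subspace.
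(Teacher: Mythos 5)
Your proposal is correct, and its skeleton matches the paper's: both reduce the lemma to Lemma~\ref{LemmaA} (equivalently, condition (iv) of the corollary preceding the statement), so that the only real content is showing that $H$ is not an isometry group. The difference lies in how that crux is argued. The paper works with the field $\langle h\rangle_{\Q}$: an invariant form with matrix $\Phi$ makes $x\mapsto \Phi x^{\top}\Phi^{-1}$ a $\Q$-automorphism of order $2$ of this field, which is impossible because the field has odd degree $n$ over $\Q$ (an order-$2$ automorphism would force $2\mid n$ via its fixed field). You instead work with eigenvalues: the form makes $g^{\top}$ conjugate to $g^{-1}$, so $g$ and $g^{-1}$ share the irreducible characteristic polynomial $f$; its root set is then closed under inversion, and since $n$ is odd this involution fixes some root, forcing $\pm 1$ to be a root of $f$, contradicting irreducibility. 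These are two faces of the same parity obstruction --- your involution on the $n$ roots is the paper's order-$2$ automorphism read through the embeddings of $\Q(g)$ --- so nothing essential changes; what your version buys is elementarity (only separability in characteristic $0$ and a counting argument, no field theory), while the paper's version is more compact and consistent with the field-automorphism technique it uses elsewhere, e.g.\ in the proof of Lemma~\ref{Lemma21n}. Your ancillary observations --- that an invariant form for an irreducible group is automatically non-degenerate, and that the argument genuinely fails in even degree --- are both correct and worth keeping.
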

\begin{proof}
We appeal to Lemma~\ref{LemmaA}. Let 
$h\in H$ be irreducible. 
Suppose that $H$ preserves a 
form with (symmetric or skew-symmetric) matrix $\Phi$.
Then $x \mapsto \Phi x^\top \Phi^{-1}$
defines a $\Q$-automorphism of $\langle h \rangle_{\Q}$ 
of order $2$. But 
$\langle h \rangle_{\Q}$ is a field extension 
of odd degree $n$. Hence $H$ is not an isometry group. 
\end{proof}

\begin{corollary}\label{Corollary32n}
For $n > 2$, a finitely generated subgroup of $\SL(n,\Z)$ is dense 
if and only if it contains an irreducible element and is not 
solvable-by-finite. 
\end{corollary}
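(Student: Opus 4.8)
\emph{Proof proposal.} The plan is to establish the two implications separately, obtaining the ``if'' direction directly from Lemma~\ref{Lemma32n} and the ``only if'' direction from strong approximation together with a lifting argument.

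The ``if'' direction is immediate: Lemma~\ref{Lemma32n} asserts precisely that, for $n>2$, a subgroup of $\SL(n,\Z)$ containing an irreducible element and not solvable-by-finite is dense. (An element $h$ with characteristic polynomial irreducible over $\Q$ makes $\langle h\rangle$, hence $H$, act irreducibly, and the odd degree $n$ of the field $\langle h\rangle_{\Q}$ precludes an order-two $\Q$-automorphism, so $H$ is not an isometry group.) Thus this half needs no new argument beyond citing the lemma.

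For the ``only if'' direction, assume $H$ is dense. That $H$ is not solvable-by-finite is part of the density characterisation already established above (the Zariski closure of a solvable-by-finite group is again solvable-by-finite, whereas $\SL(n,\R)$ is not). It remains to exhibit an irreducible element, and here I would invoke strong approximation: density gives $\varphi_p(H)=\SL(n,p)$ for all but finitely many primes $p$. Fixing one such $p$, I would locate $\bar h\in\SL(n,p)$ whose characteristic polynomial is irreducible of degree $n$ over $\F_p$, and then lift, using $\bar h\in\varphi_p(H)$, to obtain $h\in H\leq\SL(n,\Z)$ with $\varphi_p(h)=\bar h$. The characteristic polynomial of $h$ is then a monic integer polynomial reducing modulo $p$ to the irreducible characteristic polynomial of $\bar h$, so by Gauss's lemma it is irreducible over $\Q$; hence $h$ is an irreducible element of $H$, as required.

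The main point to verify, and the only genuinely technical step, is the existence of such $\bar h$ inside $\SL(n,p)$ rather than merely $\GL(n,p)$: one must reconcile the determinant-one constraint with generating the degree-$n$ extension. I would supply this via a Singer cycle. Here $\GL(n,p)$ contains a cyclic irreducible subgroup $C\cong\F_{p^n}^{\times}$ of order $p^n-1$, and its determinant-one subgroup $C_1=\ker N$, where $N$ is the norm to $\F_p$, has order $(p^n-1)/(p-1)$. An element of $C$ has irreducible characteristic polynomial exactly when it generates $\F_{p^n}$ over $\F_p$, i.e.\ lies in no proper subfield; since $n$ is prime the only proper subfield is $\F_p$, and the norm-one elements lying in $\F_p^{\times}$ number at most $n$. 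Hence $C_1$ contains at least $(p^n-1)/(p-1)-n$ field generators, a positive quantity, yielding the desired $\bar h\in\SL(n,p)$. With this bookkeeping in place, the whole statement reduces to Lemma~\ref{Lemma32n}, the established density characterisation, and the reduction-modulo-$p$ criterion for irreducibility.
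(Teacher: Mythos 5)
Your proposal is correct and matches the paper's (largely implicit) route: the paper obtains the ``if'' direction directly from Lemma~\ref{Lemma32n} and states the converse without proof, it being exactly the strong-approximation argument you spell out (dense $\Rightarrow$ not solvable-by-finite via closure properties, plus lifting an element with irreducible characteristic polynomial from $\varphi_p(H)=\SL(n,p)$ using Gauss's lemma). Your Singer-cycle count supplying $\bar h\in\SL(n,p)$ is a correct filling-in of the detail the paper omits, and its reliance on $n$ being prime is legitimate here since the subsection containing the corollary assumes $n$ prime throughout.
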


\begin{remark}\label{remark31}
Lemma~\ref{LemmaB} allows us to replace `not solvable-by-finite'
in Lemma~\ref{Lemma32n} and Corollary~\ref{Corollary32n}  
by `infinite  non-solvable primitive', or by  `infinite non-solvable' 
if $n=3$
(cf.~\cite[p.~415]{LongReidI}, \cite[Theorem 2.2]{LongReidII}).
\end{remark}

\subsection{Algorithms for groups with a transvection} 
In \cite[Section 3.2]{Density} we gave a straightforward 
procedure $\tt PrimesForDense$
to compute $\Pi(H)$ if $H$ is dense in $\SL(2n+1, \Z)$
or $\Sp(2n, \Z)$ and contains a known transvection.  
The case $H\leq \SL(2n, \Z)$ was left open. Now we close that gap.
\begin{lemma}\label{LemmaTrans}
Suppose that $H \leq \SL(2n, \Z)$ contains a transvection $t$. 
Then $H$ is dense
if and only if $N := \langle t \rangle^{H}$ is absolutely irreducible 
and $\mathrm{tr}(h) \neq \mathrm{tr}(h^{-1})$ for some $h$ in $N$.
\end{lemma}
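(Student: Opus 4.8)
The plan is to establish the chain of equivalences: $H$ is dense $\iff$ $N$ is dense $\iff$ $N$ is absolutely irreducible and not an isometry group, where by Lemma~\ref{Lemma24} the last condition is exactly that $\mathrm{tr}(h)\neq\mathrm{tr}(h^{-1})$ for some $h\in N$. (The trace condition can hold only when the degree $2n$ exceeds $2$, the degree-two case being vacuous here and covered separately.) The equivalence $H$ dense $\iff$ $N$ dense is the easy end. Since $N=\langle t\rangle^{H}$ is normal in $H$, if $H$ is dense then the Zariski closure $\overline{N}$ is a closed normal subgroup of $\overline{H}=\SL(2n,\R)$; as the transvection $t$ is non-scalar, $\overline{N}$ is not contained in the (finite) centre, and almost-simplicity of $\SL(2n)$ forces $\overline{N}=\SL(2n)$, i.e. $N$ is dense. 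Conversely $N\le H$ gives $\overline{H}\supseteq\overline{N}=\SL(2n)$, so $N$ dense yields $H$ dense. It therefore suffices to treat $N$.

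For the forward direction applied to $N$: if $N$ is dense then it is absolutely irreducible, since the lattice of invariant subspaces of $N$ coincides with that of its closure $\SL(2n)$, which acts absolutely irreducibly; and $N$ cannot preserve a non-degenerate bilinear form, for the condition $g^{\top}Bg=B$ is Zariski closed and would then pass to $\overline{N}=\SL(2n)$, impossible in degree $2n\ge 4$ for dimension reasons (any orthogonal or symplectic group is a proper subgroup of $\SL(2n)$). By Lemma~\ref{Lemma24} this produces some $h\in N$ with $\mathrm{tr}(h)\neq\mathrm{tr}(h^{-1})$.

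The substance lies in the reverse implication: assuming $N$ absolutely irreducible with $\mathrm{tr}(h)\neq\mathrm{tr}(h^{-1})$ for some $h\in N$, I would show $N$ is dense. First note that $N$ is generated by transvections (the $H$-conjugates of $t$), and that $t$ has infinite order: writing $t=1_{2n}+A$ with $A$ a rank-one nilpotent over $\Z$ gives $t^{k}=1_{2n}+kA\neq 1_{2n}$, so $N$ is infinite. Let $G$ be the Zariski closure of $N$ over $\C$. Each transvection lies in the connected one-dimensional subgroup $\{1_{2n}+sA:s\in\C\}\cong\mathbb{G}_{a}\subseteq G$, so all transvections lie in the identity component $G^{\circ}$; as they generate a Zariski-dense subgroup, $G=G^{\circ}$ is connected. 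Absolute irreducibility makes the unipotent radical act trivially (its fixed space is a nonzero $G$-submodule, hence everything), so $G$ is reductive; and since $G\le\SL(2n)$, its central torus acts by scalars of determinant $1$ and is thus finite, hence trivial, so $G$ is semisimple.

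At this point I would invoke the classification of irreducible linear groups generated by transvections (McLaughlin): a connected semisimple group generated by transvections and acting absolutely irreducibly in degree $2n$ is either $\SL(2n)$ or $\Sp(2n)$ in its natural representation. This classification is the crux, and the one external input the argument genuinely relies on; it is exactly what separates the even-degree case (where $\Sp(2n)$ appears and must be excluded) from the odd-degree and symplectic cases settled earlier. The symplectic alternative is then ruled out by hypothesis: if $G=\Sp(2n)$ then $N\le\Sp(2n)$ preserves the defining alternating form, so $N$ is an isometry group and, by Lemma~\ref{Lemma24}, $\mathrm{tr}(g)=\mathrm{tr}(g^{-1})$ for every $g\in N$, contradicting the chosen $h$. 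Hence $G=\SL(2n)$, so $N$ — and therefore $H$ — is dense, which completes the argument.
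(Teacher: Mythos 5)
Your proof is correct, but it takes a genuinely different route from the paper's. In the forward direction the paper cites \cite[Corollary~3.5]{Density} for absolute irreducibility of $N$ and, for the trace condition, argues that if $N$ preserved a form then normality plus absolute irreducibility would force the dense group $H$ into a similarity group (via \cite[Lemma~1.8.9]{Bray}); you instead prove outright that $N$ is Zariski dense (the closure of a normal subgroup is normal in the closure of $H$, and it is non-central because $t$ is non-scalar), which re-derives that corollary and makes this half self-contained. The real divergence is in the reverse direction. The paper's argument is a strong-approximation argument: it reduces modulo a suitable prime $p$ (absolute irreducibility and the trace inequality persist modulo almost all $p$), applies the Zalesskii--Sere\v{z}kin classification \cite{ZalesSeregkin} of finite irreducible groups generated by transvections to get $\varphi_p(N)=\SL(2n,p)$ or $\Sp(2n,p)$, eliminates $\Sp$ by Lemma~\ref{Lemma24}, and then converts the single congruence image $\varphi_p(H)=\SL(2n,p)$, $p>3$, into density of $H$ via \cite[Proposition~1]{Lubotzky97}. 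You never leave characteristic zero: you analyze the closure $G=\overline{N}$ over $\C$ (connected, reductive, semisimple) and invoke McLaughlin's infinite-field classification to get $G=\SL(2n,\C)$ or $\Sp(2n,\C)$, eliminating $\Sp$ the same way. Both routes hinge on a transvection-classification theorem plus Lemma~\ref{Lemma24}. The paper's route buys alignment with the algorithmic framework --- its proof is simultaneously the correctness argument for ${\tt PrimesForDense}(H,t)$, which works with congruence images --- and uses only finite group theory downstream; yours avoids Lubotzky's theorem and strong approximation altogether and exposes the $\SL$/$\Sp$ dichotomy at the level of the algebraic group, at the cost of more algebraic-group machinery and of yielding no finite-image information for the algorithm.

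One point you should make explicit: McLaughlin's theorem requires generation by \emph{full} transvection subgroups, not by individual transvections --- $\SL(2n,\Z)$ itself is generated by transvections yet is neither $\SL(2n,\Q)$ nor symplectic, so the weaker hypothesis is genuinely insufficient. Your setup does supply the stronger one: the closed connected subgroup of $G$ generated by the one-parameter groups $\{1+sA_h : s\in\C\}$ contains every conjugate of $t$, hence contains $N$, hence is closed and dense in $G$, so it equals $G$. Add that sentence; as written, the phrase ``generated by transvections'' leaves the key hypothesis of the classification unverified.
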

\begin{proof}
Suppose that $H$ is dense. Then $N$ is absolutely irreducible 
by \cite[Corollary 3.5]{Density}. If $\mathrm{tr}(h) = \mathrm{tr}(h^{-1})$ 
for all $h \in N$,  then by Lemma~\ref{Lemma24} there is a 
form with matrix $\Phi$ such that $h \Phi h^\top = \Phi$.
Since $N \unlhd H$ and $N$ is
absolutely irreducible, $h \Phi h^\top = \alpha \Phi$ 
for all $h \in H$ and some $\alpha \in \Q$ (see, e.g., 
\cite[Lemma~1.8.9, p.~41]{Bray}).
This contradicts density of $H$.
 
Now suppose that $N$ is absolutely irreducible and 
$\mathrm{tr}(h) \neq \mathrm{tr}(h^{-1})$ for some $h\in N$. 
Then $\varphi_p(N)$ is absolutely irreducible and
$\varphi_p(\mathrm{tr}(h)) \neq \varphi_p(\mathrm{tr}(h^{-1}))$
for almost all primes $p$. 
So there are $p > 3$ and $g\in \varphi_p(N)$ 
such that $\varphi_p(N)$ is absolutely irreducible
and $\mathrm{tr}(g) \neq \mathrm{tr}(g^{-1})$.
Since $\varphi_p(N)$ is generated by transvections, the theorem of
\cite[p.~1]{ZalesSeregkin} implies that $\varphi_p(N)=\SL(2n, p)$
or $\Sp(2n, p)$.
Since the latter possibility is ruled out by Lemma~\ref{Lemma24}, we
must have $\varphi_p(H) = 
\SL(2n, p)$ and so $H$ is dense
(see \cite[Proposition~1]{Lubotzky97}).
\end{proof}

The procedure ${\tt PrimesForDense}(H, t)$, based on
Lemma~\ref{LemmaTrans}, accepts dense $H\leq \SL(2n, \Z)$ 
containing a transvection $t$, and returns $\Pi(H)$. It combines
${\tt PrimesForAbs}$-${\tt Irreducible}(N)$ and ${\tt PrimesForIsometry}(N)$, 
checking whether 
$\varphi_p(H) = \SL(2n, p)$ for each $p$ in the union of their outputs. 
See \cite[Section 3]{Density} for an
algorithm to compute a basis of 
$\langle N \rangle_{\Q}$ without computing (a full generating set of) 
the normal closure $N$. 
Similarly, the application of ${\tt PrimesForIsometry}$ does not
require computing $N$, and just randomly selects $h \in N$ 
such that $\mathrm{tr}(h) \neq \mathrm{tr}(h^{-1})$.

\subsection{General considerations}
\label{randominfo}

We comment further on the operation of our algorithms.

When selecting (pseudo-)random elements of $\SL(n, \Z)$ 
for some subprocedures, we seek just one element with a nominated property.
These will be plentiful in dense subgroups.
Hence we do not aim for any semblance of a uniform distribution 
(cf.~\cite[Section~5]{Rivin2}),
but randomly take words of length $5$ in the given generators. If these 
repeatedly fail to have the desired property then we gradually increment 
the word length. We do not have a theoretical bound on 
the runtime for this process; but in practice we observe that it is
very fast.

At the start of the calculation we also select (e.g., by computing the
orders, or invoking composition tree on images of $H$ modulo different 
primes~\cite{gaprecog}) 
a prime $p_0>3$ such that
$\varphi_{p_0}(H)=\SL(n,p_0)$. The properties of
elements that we are seeking may then be maintained modulo $p_0$. That
is, instead of searching in $H$, we search for an element
$\bar h$ in $\varphi_{p_0}(H)$ that has the desired properties (over $\Z_{p_0}$)
and lift to the pre-image $h\in H$.

Each of the subsidiary procedures for ${\tt PrimesForDense}(H)$
returns a positive integer $d$ divisible by every prime $p$ 
such that $\varphi_p(H)$ is in the relevant
class of maximal subgroups of $\SL(n,p)$. However, $d$ can  
have prime factors not in $\Pi(H)$.
Furthermore, these factors
might be so large as to make factorization of $d$ impractical, or 
make the test of the congruence image overly expensive.
Thus we do not factor $d$ fully,
but only attempt a cheap partial factorization (e.g., by trial
division and a Pollard-$\rho$ algorithm). If $d$ does not factorize, or has
large prime factors (magnitudes larger than the entries of
the input matrices), then we compute another positive integer $d'$ using the
same algorithm but with different choices of random elements, and replace 
$d$ by $\gcd(d,d')$. 

Our computational realization of strong approximation stands in
contrast to Breuillard's quantitative version~\cite[Theorem~2.3]{Breuillard}.
His bound on the primes that can appear in $\Pi(H)$ is not explicit. We
compute the entire set $\Pi(H)$; and can do so quickly, as shown in the next 
section.

\section{Experimenting with low-dimensional dense subgroups}
\label{Experimenting}

In this section we present experimental results obtained from
our {\sf GAP} implementation of the algorithms.
We demonstrate the practicality of our software and how it 
can be used to obtain important 
information about groups.
In particular we describe all congruence images, as explained
in the next subsection. 

\subsection{Computing all congruence images}
\label{ComputingAllCongruenceImages}
Let $H\leq \SL(n,\Z)$ be dense. 
As in \cite[Section~2.4.1]{Density}, let $\tilde{\Pi}(H) = \Pi(H)\cup \{ 2\}$
if $\varphi_2(H) = \SL(n, 2)$  and $\varphi_4(H) \neq 
\SL(n, 4)$; let $\tilde{\Pi}(H) = \Pi(H)$ otherwise. The disparity 
between $\tilde{\Pi}(H)$ and $\Pi(H)$ can arise only when $n\leq 4$, and
$M(H)$ is even but $2\not \in \Pi(H)$.
By \cite[Theorem 2.18]{Density}, $\tilde{\Pi}(H)= \pi(M(H))$.
If $n>2$ then $\varphi_k(H) = \varphi_k(\mathrm{cl}(H))$ for all $k$; 
so $\tilde{\Pi}(H) = \tilde{\Pi}(\mathrm{cl}(H))$. We 
may therefore assume that $H$ is arithmetic, of 
level $M$.
Let $a =\gcd(k,M)$, so $k=abc$, $\pi(b)\subseteq \pi(a)$, and 
$\gcd(c,a) = 1$. Then
$\varphi_k(H) \cong H/(H\cap \Gamma_k)\cong H\Gamma_k/\Gamma_k$ is 
a subgroup of $\Gamma_{ab}/\Gamma_k \times \Gamma_c/\Gamma_k$.
It is not difficult to show that 
$\varphi_k(H)$  splits as a direct product 
of $\Gamma_{ab}/\Gamma_k$ with 
$Q:=((H\Gamma_k)\cap \Gamma_c)/\Gamma_k$.
Since $\Gamma_{ab}/\Gamma_k\cong
\SL(n,\Z)/\Gamma_c$, this expresses
$\varphi_k(H)$ as a direct product of
$Q$ with a subgroup isomorphic to $\SL(n,\Z_c)$.
Hence the task in describing all congruence
images of $H$ boils down to computing with the quotient $Q$
of $\varphi_k(H)$ in $\SL(n,\Z_k)$; in effect, ranging over all 
divisors of $M$.
If $n=2$ then the congruence subgroup property does not hold, and 
we can only handle $k=p$ prime. 

In some of the examples below we describe
the congruence quotient modulo the level $M$, exhibiting which parts of its
structure arise for various prime powers. We give this as an ATLAS-style
composition structure \cite{Atlas} (separating composition factors by 
dots; cf.~\cite{Bray}), marked
up to show the prime powers for which each factor first arises.
We emphasize that these have been generated `semiautomatically' 
using {\em some} composition series that refines the congruence structure, not
necessarily the best possible series.
One example from Table~\ref{Table2} is a group of level $3^45{\cdot}19$
with quotient structure
\[
\ppqf{3^4}{3^4}.\ppqf{3^3}{3^3}.\ppqf{3^3}{3^2}.\ppqf{5^2.2.2.2}{5}.
\ppqf{3.3}{3,5}.\ppqf{L_2(19)}{19}
\]
In the standard notation $L_m(q) := \PSL(m,q)$,
this has congruence image $L_2(19)$ modulo $19$, which is a simple direct
factor not interacting with the other primes. The quotient modulo $3$ has
structure $3.3$ (and is almost certainly 
the group
$3^2$). The quotient modulo $5$ is $5^2.2.2.2.3.3$, forming a subdirect
product with the quotient of order $3$ in which the full factor $3.3$ is glued
together. Modulo $9$ the group possesses a factor $3^3$ (of the
possible $3^{3\cdot 3-1}=3^8$), modulo $27$ another factor $3^3$, and modulo
$81$ a factor $3^4$. (Since $3^4$ is the prime power dividing the level, the
quotient modulo $243$ would contain a full $3^8$.)  The structural
analysis in~\cite[Section 2]{Density} proves that the exponent 
for $p^{i+1}$ cannot be
smaller than the exponent for $p^i$.
The name indicates all proper prime powers dividing the level.
Thus `empty' factors $\ppqf{\ }{p^a}$ are possible if the group 
has no elements on that level.

\vspace{4pt}

Experimental results are displayed in Tables~\ref{Table2} and \ref{Table3} 
(writing $A_m$ for $\mathrm{Alt}(m)$ and $S_m$ for $\mathrm{Sym}(m)$). 
Our actual implementation computes $\tilde{\Pi} = \pi(M)$ rather than 
$\Pi(H)$. 
We do not state $\Pi(H)$; as noted above, this set almost always coincides
with $\pi(M)$.

Experiments were performed on
a 2013 MacPro with a 3.7 GHz Intel Xeon E5 utilizing up to 8GB of memory. 
The software can be accessed \href{http://www.math.colostate.edu/~hulpke/arithmetic.g}{here}.
Some documentation~\cite{GAPDoc} is also available.

\subsection{Low-dimensional dense subgroups}

Our examples in this subsection
come from a family of integral representations of 
finitely presented 
groups, as defined in \cite{LongReidI,LongReidII,LongNew}.
For each test group $H$ we compute $\Pi(H)$, incidentally justifying  
that $H$ is dense. Thereafter we compute  
$M(H)$, $|\SL(n, \Z) : H|$, 
and the congruence quotients of $H$. 
\subsubsection{The fundamental group of the figure-eight knot complement}
\label{figure8}
Adopting the notation of \cite[p.~414]{LongReidI}, let
\[
\Gamma := \langle x,y,z \mid zxz^{-1} = xy, zyz^{-1} = yxy \rangle;
\]

\noindent this is the fundamental group of the figure-eight knot complement. 
Put $F = \langle x, y \rangle$. 
In \cite{LongReidI}, two families of representations $\beta_T$, 
$\rho_k$ of $\Gamma$ in $\SL(3, \Z)$ were constructed.  
Section~4 of \cite{Density} reports on experiments with $\beta_T$ 
for a range of $T$ and $\rho_k$ for $k = 0, 2, 3, 4, 5$. The groups 
$\rho_k(\Gamma)$, $\rho_k(F)$ for $k \neq 0, 2, 3, 4, 5$ are 
of special interest (see \cite[Section~5]{LongReidI}). 
However, neither the methods of \cite{LongReidI} nor 
those of \cite{Density} facilitate proper study of $\rho_k$ 
for such $k$. 

We have
\[
\rho_k(x) =  
\left(\begin{array}{ccc} 1 & -2 & 3
\\
0 & k & -1-2k
\\
0 & 1 & -2 \end{array} \right), \quad \rho_k(y)  
= 
\left(\begin{array}{ccc} -2-k & -1 & 1
\\
-2-k & -2 & 3
\\
-1 & -1 & 2 \end{array} \right),
\]
\[
\rho_k(z) 
= 
\left(\begin{array}{ccc} 0 & 0 & 1
\\
1 & 0 & -k
\\
0 & 1 & -1-k \end{array} \right).
\]


\noindent 
The results of experiments with 
$\rho_k(\Gamma)$ and $\rho_k(F)$ 
are collected in Table~\ref{Table2}; here $M$ is
the level (which turns out to be the same for both $\Gamma$ and $F$),
$\mbox{Index}_\Gamma$ is $|\SL(3, \Z) : \mathrm{cl}(\rho_k(\Gamma))|$, and 
$\mbox{Index}_{\Gamma,F}$ is $|\mathrm{cl}(\rho_k(\Gamma)): \mathrm{cl}(\rho_k(F))|$.
The last column is the congruence image of $\rho_k(F)$ modulo $M$.
For $k=1$, $6$, $10$, the groups surject modulo $2$ but not 
modulo $4$. 

Determination of the exceptional primes was instantaneous.
The time to calculate level and index increased roughly with level,
from a few seconds for $k=1$ to about $15$ minutes for $k=20$.

\medskip

\begin{table}[htb]
\begin{tabular}{r|r|r|r|r}
$k$
&$M$ \phantom{xx}
&$\mbox{Index}_\Gamma$\phantom{xxxxxx}
&Index$_{\Gamma,F}$
&StructureF \phantom{XXXXX}
\\
\hline
$1$
&$2^{2}3^{4}$
&$2^{10}3^{15}13$
&$2^{2}$
&$\ppqf{3^4}{3^4}.\ppqf{3^3}{3^3}.\ppqf{\ }{2^2}.\ppqf{3^3}{3^2}.\ppqf{3.3}{3}.%
\ppqf{L_3(2)}{2}$
\\
$6$
&$2^{2}31
{\cdot}43$
&$2^{10}3^{3}7
{\cdot}43^{2}331
{\cdot}631$
&$2
{\cdot}3
{\cdot}5$
&$\ppqf{\ }{2^2}.\ppqf{31.31.2}{31}.\ppqf{L_2(43)}{43}.\ppqf{L_2(31)}{31}.%
\ppqf{L_3(2)}{2}$
\\
$7$
&$3^{4}5
{\cdot}19$
&$2^{6}3^{17}5
{\cdot}13
{\cdot}19^{2}31
{\cdot}127$
&$2^{2}3^{2}$
&$\ppqf{3^4}{3^4}.\ppqf{3^3}{3^3}.\ppqf{3^3}{3^2}.\ppqf{5^2.2.2.2}{5}.\ppqf{3.3\
}{3,5}.\ppqf{L_2(19)}{19}$
\\
$10$
&$2^{2}3^{4}11
{\cdot}37$
&$2^{14}3^{16}7^{2}13
{\cdot}19
{\cdot}37^{2}67$
&$2^{2}3^{2}5$
&\begin{minipage}[t]{4.5cm}
$\ppqf{3^4}{3^4}.\ppqf{3^3}{3^3}.\ppqf{\ }{2^2}.\ppqf{3^3}{3^2}.
\ppqf{11.11.2}{11}.\ppqf{3}{3}.\ppqf{3}{3,11}$
$\hphantom{XX}.\ppqf{L_2(37)}{37}.\ppqf{L_2(11)}{11}.\ppqf{L_3(2)}{2}$
\end{minipage}
\\
$15$
&$229
{\cdot}241$
&$2^{6}3^{3}5
{\cdot}97
{\cdot}181
{\cdot}241^{2}19441$
&$2
{\cdot}3
{\cdot}19$
&$\ppqf{229.229.2}{229}.\ppqf{L_2(241)}{241}.\ppqf{L_2(229)}{229}$
\\
$20$
&$409
{\cdot}421$
&$2^{4}3^{3}5
{\cdot}7
{\cdot}421^{2}55897
{\cdot}59221$
&$2^{2}3
{\cdot}17$
&$\ppqf{409.409.2}{409}.\ppqf{L_2(421)}{421}.\ppqf{L_2(409)}{409}$
\\
\end{tabular}

\medskip

\caption{}
\label{Table2}
\end{table}

\subsubsection{Triangle groups} 
\label{Delta}
Next we look at triangle groups 
$\Delta(p, q, r) = \langle a, b \mid a^p = b^q = (ab)^r = 1 \rangle$.

In \cite{LongReidII}, representations of 
$\Delta(3, 3, 4)$ in $\SL(3, \Z)$ are defined  by
\[
a \mapsto a_1 = {\footnotesize \left( \begin{array}{ccc}
0 & 0 & 1\\
1 & 0 & 0 \\
0 & 1 & 0\end{array}\right)}, \quad 
b \mapsto b_1(t) = {\footnotesize \left( \begin{array}{ccc}
1 & 2-t+t^2 & 3+t^2\\
0 & -2+2t-t^2 & -1+t-t^2\\
0 & 3 - 3t + t^2 &(-1 + t)^2
\end{array}\right)}.
\] 
These representations are faithful for all $t \in \mathbb{R}$, 
and if $t \in \Z$ then the images are dense 
and non-conjugate for different $t$~\cite[Theorem 1.1]{LongReidII}. 
If $t = 1$ then the group is conjugate to the one constructed by 
Kac and Vinberg~\cite[p.~422]{LongReidI}. 
Put $H_1(t) = \langle a_1, b_1(t) \rangle$. 

In \cite[p.~8]{LongReidII}, the following 
faithful 
dense representations $H_2(t) = \langle a_2(t), b_2 \rangle$ of 
$\Delta(3,4,4)$ were constructed:
\[
a \mapsto a_2(t) = 
{\footnotesize \left( \begin{array}{ccc} 
1 & 4+3t^2/4 & 3(6-t+t^2)/2\\
0 & -(4+t+t^2)/2 & -3-t^2\\
0 & (4+2t+t^2)/4 & (2+t+t^2)/2
\end{array}
\right)}, 
\quad
b \mapsto b_2 = {\footnotesize \left( \begin{array}{ccr}
 0 & 0 & 1 \\ 
1 & 0 & -1\\
0 & 1 & 1\end{array}
\right)}.
\]

In \cite[p.~13]{LongNew}, faithful representations 
of $\Delta(3,3,4)$ in $\SL(5,\Z)$ are defined by
\[
a \mapsto a_3(k) = 
{\footnotesize \left( \begin{array}{ccccc} 
1 & 0 & -3-2k-8k^2 & -1+10k+32k^3 & -5-16k^2\\
0 & 4(-1+k) & -13 - 4k & 3+16(1+k)^2 & -4+16k \\
0 & 1-k+4k^2 & 3-2k+8k^2 & -2(1+3k+16k^3) & 3+16k^2\\
0 &  k & 2k & 1-2k-8k^2 & 1+4k\\ 
0 & 0 &  3k & 3(-1+k-4k^2) & -2
\end{array}
\right)}, 
\]
\[
b \mapsto b_3(k) = 
{\footnotesize \left( \begin{array}{ccccc} 
0 & 0 & -3-2k-8k^2 & -1+10k+32k^3 & -5-16k^2\\
0 &1 & 3+4k & -13-8k-16k^2 & 4-16k\\
0 & 0 & -2(1+k+4k^2) & 6k+32k^3 & -3-16k^2\\
1 & 0 & -2(1+k) & -1+2k+8k^2 & -1-4k\\
2k & 0 & 1-2k &-4k & 1\end{array} \right)}.
\]
As $k$ ranges over $\Z$, the 
 $H_3(k) = \langle a_3(k), b_3(k) \rangle$ are dense and
pairwise non-conjugate.

It is known that $H_1(t)$, $H_2(t)$, $H_3(k)$ are 
thin~\cite{LongReidII,LongNew}. 
For each of these groups we computed its level $M$ and the index 
of its arithmetic closure in $\SL(n, \Z)$ for several
values of the parameters. See Table~\ref{Table3}. 

For $t\equiv1\pmod{4}$, the $H_1(t)$ as far as we tested
surject onto $\SL(3,2)$ but not onto $\SL(3,\Z_4)$.

Runtimes for degree $3$ groups 
were consistent with 
the previous example. In degree $5$,
identification of primes was again instantaneous, while the 
calculation of level and index took
about $6$ minutes for $H_3(0)$ and $20$ minutes for $H_3(3)$. So
we did not try larger $k$.

\subsubsection{Random generators}
We constructed subgroups of $\SL(n,\Z)$ for $n=3$, $5$ generated by a
pair of pseudo-random matrices (via the {\sf GAP} command ${\tt RandomUnimodularMat}$).
More than half of the groups so generated
surject onto $\SL(n,p)$ modulo all primes $p$ (and modulo $4$). 
We attempted to verify whether each group is arithmetic by expressing 
its generators as words in standard generators of $\SL(n, \Z)$ and 
running a coset enumeration with the presentation from~\cite{Steinberg85}. 
As the enumeration never terminated, we suspect that these groups
are not arithmetic
(note that a random finitely generated subgroup of $\SL(n, \Z)$ is 
likely to be thin~\cite{FuchsRivin,Rivin}).

\subsubsection{Further experimentation}
Comparing congruence images with
finite quotients (obtained, e.g., by the low-index algorithm of
\cite[Section~5.4]{HoltHandbookCGT}) may help to decide whether 
a dense representation of a finitely presented group is faithful, or
justify that a group is thin. 
For example, low-index calculations with the finitely presented group
$\Gamma$ as in 
Subsection~\ref{figure8} expose quotients (such as $\mathrm{Sym}(23)$, 
$\mathrm{Sym}(29)$, $\mathrm{Alt}(11)\wr C_2$, to name just a
few) that cannot be congruence images of any $\rho_k(\Gamma)$, as they 
do not have representations of suitably small degree.
Thus $\rho_k$ cannot be faithful on $\Gamma$ if
$\rho_k(\Gamma)$ is arithmetic 
(cf.~\cite[Question~5.1]{LongReidI}).
This fact has a clear explanation: $F$ is free and normal in $\Gamma$;
hence a representation of $\Gamma$ in $\SL(3, \Z)$ is arithmetic 
precisely when its restriction on $F$ is 
arithmetic~\cite[p.~420]{LongReidI}; 
but any virtually free group cannot have a faithful arithmetic 
representation in $\SL(n, \Z)$ for $n >2$.

To illustrate another potential application of our algorithms, we
show that faithful dense representations 
of the triangle groups $\Delta(3,3,4)$, $\Delta(3,4,4)$ 
in $\SL(3,\Z)$ or $\SL(5, \Z)$ are not arithmetic; this includes 
$H_1(t)$, $H_2(t)$, $H_3(k)$ as in Subsection~\ref{Delta} 
(cf.~\cite{LongReidII,LongNew}).
Indeed, $\Delta(3,3,4)$ and $\Delta(3,4,4)$ each have a quotient 
isomorphic to $\mathrm{Alt}(20)$. This is not a congruence 
quotient of an arithmetic group in $\SL(3,\Z)$ or 
$\SL(5, \Z)$, because $\mathrm{Alt}(20)$ does not have a faithful 
representation in $\SL(3,p)$ or $\SL(5,p)$ for any $p$.

We also use this example to compare the capability of our algorithm 
with that of the low-index algorithm. 
Congruence quotients of $\rho_k(\Gamma)$ 
(modulo any integer $m>1$, including $m$
not dividing the level) produced by our algorithms expose quotients of 
$\Gamma$ (such as $\SL(n,p)$ for large $p$)  that are infeasible to find 
through a low-index computation, because these groups do not have a faithful 
permutation representation of sufficiently small degree. 
Using a homomorphism search~\cite[Section~9.1.1]{HoltHandbookCGT},
we find that $\Gamma$ has $34$ normal subgroups $N$ such that
$\Gamma/N\cong\SL(3,5)$. Applying our algorithm, we identify $80$ values of 
$k$ in the range $1,\ldots,100$, such that $5\not\in\Pi(\rho_k(\Gamma))$.
For these $k$, the kernels of the induced surjections
 $\Gamma \rightarrow \rho_k(\Gamma) \rightarrow \SL(3, 5)$ 
expose just $4$ of
the $34$ normal subgroups. This prompts us to conjecture that the
$\rho_k$ will not expose all $\SL(n,p)$ quotients of $\Gamma$. 
 
\subsubsection*{Acknowledgments}

We thank Mathematisches Forschungsinstitut Oberwolfach 
and the International Centre for Mathematical Sciences, Edinburgh, 
for hosting our visits in 2017 under their Research-in-Pairs and 
Research in Groups programmes, respectively.
This work was additionally supported by a 
Marie Sk\l odowska-Curie Individual 
Fellowship grant (Horizon 2020, EU Framework Programme for 
Research and Innovation), and Simons Foundation Collaboration 
Grant 244502.

\begin{landscape}
\begin{table}

\begin{tabular}{l|r|r|l}
Group
&Level
&Index
&Quotient
\\
\hline
$H_1(1)$
&$2^{2}3
{\cdot}5^{2}19$
&$
\vphantom{\displaystyle\int_0^0}%
2^{13}3^{3}5^{5}13
{\cdot}19^{3}31
{\cdot}127$
&$\ppqf{\ }{2^2}.\ppqf{5^6}{5^2}.\ppqf{3}{19}.\ppqf{2.2}{5,19}.\ppqf{3^2.2}{3}.\
\ppqf{2^2.3}{3,5}.\ppqf{A_6}{19}.\ppqf{L_3(2)}{2}$
\\
$H_1(2)$
&$2^{3}$
&$2^{7}7$
&$\ppqf{2^5}{2^3}.\ppqf{2^5}{2^2}.\ppqf{2^2.3}{2}$
\\
$H_1(5)$
&$2^{3}7
{\cdot}19
{\cdot}31$
&$2^{23}3^{6}5^{2}7^{2}19^{2}31^{3}127
{\cdot}331$
&$\ppqf{\ }{2^3}.\ppqf{\ }{2^2}.\ppqf{3}{31}.\ppqf{19^2.2.2.2}{19}.\ppqf{3}{7,19,\
31}.\ppqf{A_6}{31}.\ppqf{L_2(7)}{7}.\ppqf{L_3(2)}{2}$
\\
$H_1(9)$
&$2^{2}67$
&$2^{9}3^{2}7^{2}11^{2}17
{\cdot}31
{\cdot}67$
&$\ppqf{\ }{2^2}.\ppqf{67^2.2.2^2.3}{67}.\ppqf{L_3(2)}{2}$
\\
$H_1(10)$
&$2^{3}3
{\cdot}7$
&$2^{16}3^{4}7^{2}13
{\cdot}19$
&$\ppqf{2^5}{2^3}.\ppqf{2^4}{2^2}.\ppqf{7.7}{7}.\ppqf{3.3.2}{3,7}.\ppqf{2^2.3}{\
2,3,7}$
\\
$H_1(12)$
&$2^{3}7
{\cdot}31$
&$2^{16}3^{5}5^{2}7^{3}19
{\cdot}31
{\cdot}331$
&$\ppqf{2^5}{2^3}.\ppqf{2^4}{2^2}.\ppqf{31.31.2}{31}.\ppqf{2^2}{2,31}.\ppqf{3}{\
2,7,31}.\ppqf{L_2(7)}{7}$
\\
$H_1(50)$
&$2^{3}601$
&$2^{14}3^{3}5^{4}7^{2}13
{\cdot}43
{\cdot}601
{\cdot}9277$
&$\ppqf{2^5}{2^3}.\ppqf{2^4}{2^2}.\ppqf{601.601.2}{601}.\ppqf{2^2.3}{2,601}$
\\
$H_1(100)$
&$2^{3}3
{\cdot}19
{\cdot}43$
&$2^{19}3^{9}5
{\cdot}7^{3}11
{\cdot}13
{\cdot}19
{\cdot}43
{\cdot}127
{\cdot}631$
&$\ppqf{2^5}{2^3}.\ppqf{2^4}{2^2}.\ppqf{43.43}{43}.\ppqf{19.19}{19,43}.%
\ppqf{3.3.2}{3,19,43}.\ppqf{2^2.3}{2,3,19,43}$
\\
$H_2(2)$
&$2^{2}13$
&$2^{7}3^{2}7
{\cdot}61$
&$\ppqf{2^2}{2^2}.\ppqf{13.13.2.2}{13}.\ppqf{2^2.3}{2,13}.\ppqf{L_2(13)}{13}.%
\ppqf{2}{2,13}$
\\
$H_2(10)$
&$2^{2}5
{\cdot}109$
&$2^{10}3^{4}5^{2}7^{2}31
{\cdot}571$
&$\ppqf{2^2}{2^2}.\ppqf{109.109.2.2}{109}.\ppqf{2^2}{2,109}.\ppqf{3}{2}.%
\ppqf{L_2(109)}{109}.\ppqf{A_5}{5}.\ppqf{2}{2,5,109}$
\\
$H_2(12)$
&$2^{8}3^{4}17$
&$2^{38}3^{15}7
{\cdot}13
{\cdot}307$
&
$\ppqf{2^6}{2^8}.\ppqf{2^6}{2^7}.\ppqf{2^5}{2^6}.\ppqf{2^3}{2^5}.%
\ppqf{2^3}{2^4}.\ppqf{3^4}{3^4}.\ppqf{2^2}{2^3}.\ppqf{3^3}{3^3}.\ppqf{\ }{2^2}.%
\ppqf{3^3}{3^2}.\ppqf{17^2.2}{17}.\ppqf{3.3.2}{3,17}.
\ppqf{2^2}{2,3,17}.\ppqf{3}{2,17}.%
\ppqf{2}{2,3,17}.\ppqf{L_2(17)}{17}$
\\
$H_2(50)$
&$2^{2}5^{2}13
{\cdot}193$
&$2^{16}3^{4}5^{7}7^{2}31
{\cdot}61
{\cdot}1783$
&
$\ppqf{2^2}{2^2}.\ppqf{5^3}{5^2}.\ppqf{193.193.2}{193}.%
\ppqf{13.13.2.2}{13,193}.\ppqf{2^2}{2,193}.\ppqf{3}{2}.
\ppqf{L_2(193)}{193}.\ppqf{L_2(13)}{13}.\ppqf{A_5}{5}.\ppqf{2}{2,5,13,193}$
\\
$H_3(0)$
&$2^{3}7^{3}19^{2}$
&
$2^{52}3^{14}5^{5}7^{32}19^{21}31 {\cdot}127%
{\cdot}151
{\cdot}181
{\cdot}911
{\cdot}2801$
&$\ppqf{2^8}{2^3}.\ppqf{7^18}{7^3}.\ppqf{2^6}{2^2}.\ppqf{7^8}{7^2}.%
\ppqf{19^{12}}{19^2}.\ppqf{19^2.2.2.2}{19}.\ppqf{2^6.3}{2,19}.\ppqf{L_3(2)}{2,7}$
\\
$H_3(1)$
&$2^{6}67^{2}$
&
$2^{87}3^{6}5^{2}7^{3}11^{4}17^{2}31^{2}67^{20}
\cdot449
{\cdot}761
{\cdot}26881$
&$\ppqf{2^{21}}{2^6}.\ppqf{2^{11}}{2^5}.\ppqf{2^8}{2^4}.\ppqf{2^5}{2^3}.%
\ppqf{2^4}{2^2}.\ppqf{67^{12}}{67^2}.\ppqf{67.67.2}{67}.\ppqf{2^2.3}{2,67}$
\\
$H_3(2)$
&$2^{3}13
{\cdot}211^{2}$
&\begin{minipage}[t]{6cm}
$2^{33}3^{9}5^{6}7^{5}13^{5}31^{2}37
{\cdot}53^{2}61
{\cdot}113
{\cdot}197$
$\hphantom{XX}
{\cdot}211^{20}
\cdot1361
{\cdot}30941
{\cdot}292661$
\end{minipage}
&
$\ppqf{2^{16}}{2^3}.\ppqf{2^{14}}{2^2}.\ppqf{211^{12}}{211^2}.%
\ppqf{211^2.2.2.2}{211}.\ppqf{13.13.13.13}{13,211}.
\ppqf{2^6.3}{2,13,211}.\ppqf{L_2(169)}{13}.%
\ppqf{L_3(2)}{2}$
\\
$H_3(3)$
&$2^{6}7
{\cdot}11^{2}41^{2}$
&\begin{minipage}[t]{6cm}
$
\vphantom{\displaystyle\int_0^0}%
2^{106}3^{8}5^{13}7^{8}11^{20}19^{2}
\cdot 29^{2}31
{\cdot}41^{20}61
$
$\hphantom{XX}
{\cdot}1723
{\cdot}2801
{\cdot}3221
{\cdot}579281$
\end{minipage}
&\begin{minipage}[t]{6cm}
$\ppqf{2^{21}}{2^6}.\ppqf{2^{11}}{2^5}.\ppqf{2^8}{2^4}.\ppqf{2^5}{2^3}.%
\ppqf{2^4}{2^2}.\ppqf{11^{12}}{11^2}.\ppqf{41^{12}}{41^2}.\ppqf{41.41}{41}.%
\ppqf{11.11.2}{11,41}.$
$\hphantom{XX}\ppqf{7.7.7.7.2}{7,11,41}.\ppqf{2^2.3}{2,7,11,41}.%
\ppqf{L_2(7).L_2(7)}{7}$
\end{minipage}
\\
\end{tabular}

\medskip

\caption{}
\label{Table3}
\end{table}
\end{landscape}

\bibliographystyle{amsplain}

\end{document}